\newcommand{\M}{\ensuremath{\mathbb{M}}}
\newcommand{\R}{\ensuremath{\mathbb{R}}}
\newcommand{\Z}{\ensuremath{\mathbb{Z}}}
\newcommand{\N}{\ensuremath{\mathbb{N}}}
\newcommand{\Q}{\ensuremath{\mathbb{Q}}}
\newcommand{\dis}{\displaystyle}
\newcommand{\vs}{\vspace{0,5cm}}
\newcommand{\sgn}{\mbox{\normalfont sgn}}
\newcommand{\T}{\mathbb{T}^2}
\newcommand{\Ss}{\mathbb{S}^2}
\newtheorem {theorem} {Theorem} 
\newtheorem {corollary}  {Corollary}
\newtheorem {lemma} {Lemma}
\newtheorem {definition}  {Definition}
\newtheorem {remark} {Remark}
\newtheorem {example} {Example}
\begin{document}


\title{Chaos in Piecewise Smooth Vector Fields on Two Dimensional Torus and Sphere}

\author{
  Ricardo M. Martins and   Durval J. Tonon}


%
%
%
%



\date{}
%

\maketitle


\begin{abstract}
In this paper we study the global dynamics of piecewise smooth vector fields defined in the two dimensional torus and sphere. We provide conditions under these families exhibits periodic and dense trajectories and we describe some global bifurcations. We also study its mi\-ni\-mal sets and characterize the chaotic behavior of the piecewise smooth vector fields defined in torus and sphere.
\end{abstract}

%
%
%
%
%
%
%
%
%


\section{Introduction}\label{secao introducao}

The theory of  piecewise smooth vector fields (PSVF for short) has been developing very fast in the last years, mainly due to its strong relation with branches of applied science, such as mechanical, aerospace engineering, electrical and electronic engineering, physics, economics, among others areas. Indeed, PSVF are in the boundary between mathematics, physics and engineering, see \cite{Makarenkov-Lamb} and \cite{Marco-enciclopedia} for a recent survey on this subject.

The PSVF are described by piecewise systems of differential equations, so that we have a smooth system defined in regions of the phase portrait. For some references on this subject, we suggest \cite{diBernardo-livro, F, Or}. 

The common frontier between the regions that separate the smooth vector fields is called switching manifold (or discontinuity manifold). There are a lot of research being made about the local behavior of PSVF near the switching manifold. For instance, see \cite{B-P-S, G-S-T, Kuz, Makarenkov-Lamb, T1,T3} for results about bifurcations on PSVF, \cite{J-T-T1, T1} for stability, \cite{J-T-T2} for reversibility and \cite{diBernardo-electrical-systems,5, C-B-F-J,6} for applications in relay and control systems. 

The mathematical formalization of the theory of PSVF was made precise by Filippov in \cite{F} and in this paper we consider his convention. As usual, we denote the switching manifold by $\Sigma$. The trajectory of the PSVF can be confined onto the switching manifold itself, when the trajectories on both sides of the switching manifold slides over the manifold after the meeting, and there remains until reaching the boundary of this (open) region, that is called sliding region.

The occurrence of such behavior, known as sliding motion, has been reported in a wide range of applications, especially on relay control systems and systems with dry friction, see \cite{diBernardo-livro, diBernardo-electrical-systems, 5,J-T-T1, J-T-T2,6, J-C} and \cite{Makarenkov-Lamb}. There are three distinguished types of regions on the discontinuity manifold: crossing region, attractive sliding region and repulsive sliding region. These regions fully describes what can happen on such manifold.

One of the biggest limitations of the theory of PSVF is the lack of global results, see \cite{Marco-enciclopedia}. In \cite{B-P-S}, authors considered conditions to get structural stability for PSVF defined in a compact differentiable manifolds, like a version of the {\it Peixoto's Theorem}. However, this paper does not consider a complete analysis of the dynamics in the cases presenting a non-trivial recurrence and tangential singularities on torus or sphere. In general, almost all results are local, in this way, the switching manifold can be taken as an hyperplane of codimension one. Recently a version of the {\it Poincar\'e-Bendixson Theorem} was proved to piecewise smooth case, see \cite{Buzzi-Carvalho-Eusebio} and therefore we are able to analyse global dynamics in compact manifolds.

Let $\M$ denote the two dimensional torus $\T$ or the sphere $\Ss$. In both cases, we will consider $\M$ obtained as the usual quotient of the square $I\times I=[0,1]\times [0,1]$.

Consider the switching manifold, denoted by $\Sigma$, as  $\Sigma=\Sigma_1\cup\Sigma_2=\{(t,0),\, t\in I\}\cup \{(t,\frac12),\, t\in I\}$ for $\M=\T$ and $\Sigma=\{(t,\frac12),\, t\in I\}$ for $\M=\Ss$. With the usual topologies on $I\times I$ generating $\M$, these choices of $\Sigma$ breaks $\M$ in two connected components, $\M^+=\M\cap (I\times [1/2,1])$ and $\M^-=\M\cap (I\times [0,1/2])$.

Rougly speaking, in the usual embedded model of the torus and sphere in $\R^3$, in the case $\M=\Ss$ we take $\Sigma$ separating $\M$ in top and bottom hemispheres, and in the case $\M=\T$ we take $\Sigma$ as the disjoint union of the inner and outer circles, separating the torus on top/bottom two half torus.

In this paper we study PSVF, denoted by $X$, defined on $\M$, with switching manifold $\Sigma$: we consider a 
piecewise smooth vector field $X=(X^+,X^-)$, where $X^+$ is defined on $\M^+$, $X^-$ is defined on $\M^-$. Over $\M^+\cap\M^-$ we adopt the Filippov's convention. We provide more details of this in Section \ref{secao teoria basica}.

Our main goal is to describe the global dynamics of $X$ in the cases where the singularities of $X$ are generic or do not exist (regular case). The main techniques that we use are the theory of contact between a smooth vector fields with the switching manifold and the dynamics of the first return map, which may be generalized for higher dimensions.

In \cite{G-S-T} and \cite{Kuz} are exhibit local normal forms for the two dimensional case. Therefore, we start the study of global dynamics on the two dimensional torus and sphere considering these families (only codimension zero). However, in \cite{Tiago-Durval} are exhibit the normal forms of codimension zero for three dimensional case, where the approach developed in this paper can be adapted.

The main results of this paper characterizes the global dynamics of PSVF not locally defined (out of euclidean spaces). We study global bifurcations, existence of minimal sets and chaos for families of PSVF, see Subsection \ref{subsecao-familiaPSVF}.

In the following we summarize the main results of this paper.

\vs

\noindent {\bf Theorem A:}  Let $X$ be a PSVF defined on $\M$ with switching manifold $\Sigma$. Suppose that $X$ is regular-transversal to $\Sigma$, without singularities or tangencies, taken in its normal form $X=(X^+,X^-)$, where $X^+(x,y)=(a,\sigma_1)$ and $X^-(x,y)=(b,\sigma_2)$, with $\sigma_i=\pm 1,i=1,2$. 

\noindent$(a)$ If $\sigma_1\sigma_2>0$ then:
\begin{itemize}
\item [$(i)$] for $\M=\T$ the trajectories of $X$ are periodic if $a\pm b\in\mathbb Q$; otherwise, $\M$ is a non trivial wandering set.
\item [$(ii)$] for $\M=\Ss$ the trajectories of $X$ connects the north and south poles of the sphere.
\end{itemize}
$(b)$ If $\sigma_1\sigma_2<0$ then the switching manifold presents a sliding motion that is a global attractor or repeller, according to the sigs of $\sigma_1$ and $\sigma_2$.\\

Theorem A is a version of {\it Kronecker-Weyl Equidistribution Theorem} for PSVF (see \cite{katok}, Proposition 4.2.1). Nevertheless, our approach it is distinctly from that used in the prove of this theorem. In our case, we analyse directly the dynamics of the first return map in the context of PSVF. See Subsection \ref{Subsecao-regular-regular} for the proof of Theorem A.

\vs

In the case where the PSVF present a finite number of fold singularities  or a finite number of critical points of the sli\-ding vector field, we show that the topological behavior of $X$ changes drastically when the number of these points changes.\\

\noindent {\bf Theorem B:} Let $X=(X^{+},X^-)$ be a PSVF defined on $\mathbb M$, where $X^+$ is a linear flow transversal to $\Sigma$, and $X^{-}$ is a vector field without singularities on $\M^-$ and with a finite and even number of visible tangencies.

$(a)$ If $\M=\T$, generically there exists a point $p^*\in \Sigma$ such that every trajectory has $p^*$ as its $\omega$-limit.

$(b)$ If $\M=\Ss$, generically there exists a point $p^*\in \Sigma$ and we can decompose $\Ss=\M_c\cup \M_h$, where $\M_c,\M_h$ are invariant sets with the property that every trajectory of points in $\M_c$ has $p^*$ as its $\omega$-limit and every trajectory of points in $\M_c$ is homoclinic to the south pole $p_S$.\\

Theorem B is a consequence of Theorems \ref{prop-Dobra-Regular} and \ref{prop-Hiperbolico} and Subsections \ref{Subsecao-Caos-Toro} and \ref{Subsecao-Caos-Esfera}. The hypothesis of an even number of tangencies is to avoid the presence of a line of singularities, and will be better explained in Subsection \ref{Subsecao-Dobra-regular}.\\

Recently some definitions of chaos for PSVF were introduced in \cite{BCE-ETDS} for planar piecewise smooth vector fields. We analyse the global dynamics of the PSVF on $\M$ and prove, under certain conditions, the occurrence of chaos:\\

\noindent {\bf Theorem C:} Let $X=(X^+,X^-)$ be a PSVF on $\M$ with the hypotheses of Theorem B.

$(a)$ If $\M=\T$, generically $X$ is chaotic on $\T$, that is, $X$ is topologically transitive and present sensitive dependence on the initial conditions.

$(b)$ If $\M=\Ss$, generically $\M=\M_h\cup \M_c$, where $\M_h,\M_c$ are open invariant sets, $\M_h$ is foliated by homoclinic trajectories and $X$ restricted to $\M_c$ is has a {\it kind} of chaotic, with $\overline \M_h\cap\overline \M_c$ a homoclinic trajectory.\\

Theorem C and its consequences (including more details about the chaotic behavior in (b)) are discussed in Section \ref{Secao-Caos}.\\

The organization of this paper is as follows: In Section \ref{secao teoria basica} we formalize some basic concepts on PSVF, as the first return map in this scenario. In Section \ref{secao resultados principais} the problem is described and some main results are stated and proved. The concept of chaos and minimal sets in the context of PSVF and the results obtained for the torus and sphere are given in Section \ref{Secao-Caos} and in Section \ref{secao-remarks} we finalize the paper with some comments and future directions of research.



\section{Basic Theory about PSVF}\label{secao teoria basica}

\subsection{Filippov's convention}\label{subsecao-Filippov}
First of all, we remember the usual construction of the two dimensional torus $\T$ and sphere $\Ss$. The model of the torus is provided by the following equivalence relation in $Q=[0,1]\times [0,1]\subset \R^2$:
\begin{equation}\label{relacao-equivalencia}(x,y)\sim (z,w) \Leftrightarrow x-z\in \Z, y-w\in \Z.\end{equation}
The model of the sphere $\Ss$ is the quotient $Q/\sim$, where $(0,s)\sim (1,s)$ for all $s\in[0,1]$, $(t,0)\sim (t',0)$ for every $t,t'\in [0,1]$ and $(t,1)\sim (t',1)$ for every $t,t'\in [0,1]$. In other words, we identify the vertical sides of the square $Q$ obtaining a cylinder and then collapse each of the borders of the cylinder to produce the north and south poles of the sphere, denoted by $p_N$ and $p_S$, respectively. 

\begin{remark}\label{observacao-const-esfera}In the above construction, the points $p_N$ and $p_S$ obtained from the collapse of the borders of the cylinder are not mapped in any parametrization (compatible with this model) of the sphere. With this model, we shall consider these points as singularities of every vector field defined on $\Ss$.
\end{remark}

So, we denote the space that the PSVF is defined by $\M$ where $\M=\T$ or $\M=\Ss$.

Consider $\Sigma_1 = \{ (x,y)\in Q ; y=0\}$ and $\Sigma_2 = \{ (x,y)\in Q ; y=\frac{1}{2}\}$. We denote $h_1(x,y)=y$ and $h_2(x,y)=y-\frac{1}{2}$. Therefore, $\Sigma_1=h_1^{-1}(0)$ and $\Sigma_2=h_2^{-1}(0)$. Clearly the switching manifolds $\Sigma_1$ and $\Sigma_2$ are the separating boundaries of the regions
$\Sigma^-=\{(x,y)\in M; 0\leq y \leq \frac{1}{2}\}$ and $\Sigma^+=\{(x,y)\in M; \frac{1}{2}\leq y \leq 1\}$.

Designate by $\mathfrak{X}^r$ the space of $C^r$-vector fields on $\M$ endowed with the $C^r$-topology with $r=\infty$ or $r\geq 1$ large enough for our purposes. Call \textbf{$\Omega^r$} the space of vector fields $X: \M \rightarrow \M$ such that
\[
X(x,y)=\left\{\begin{array}{ll}  X^+(x,y),    &\mbox{ for } \quad (x,y) \in \Sigma^+,\\
X^-(x,y),   & \mbox{ for }  \quad (x,y) \in \Sigma^-,
\end{array}\right.
\]
where $X^+=(X_1^+,X_2^+)$ and $X^- = (X_1^-,X_2^-)$ are in $\mathfrak{X}^r$. Let $h\in\{h_1,h_2\}$ and $\Sigma\in\{\Sigma_1,\Sigma_2\}$. We denote $X^{\pm}h(p)=\langle X^{\pm}(p),\nabla h(p)\rangle$ and $(X^{\pm})^{n}h(p)=\langle X^{\pm}(p), $ $\nabla (X^{\pm})^{n-1}h(p)\rangle$ the Lie's derivatives, where $\langle \cdot,\cdot\rangle$ denote the canonical inner product. We may consider $\Omega^r = \mathfrak{X}^r \times \mathfrak{X}^r$ endowed with the product topology and denote any element in $\Omega^r$ by $X=(X^+,X^-),$ which we will accept to be multivalued in points of $\Sigma$. The basic results of differential equations, in this context, were stated by Filippov in \cite{F}. Related theories can be found in \cite{diBernardo-livro,Or,Marco-enciclopedia} and references therein.

On $\Sigma\in\{\Sigma_1,\Sigma_2\}$ we generically distinguish three regions:\\
\noindent {{\bf crossing region:}} $\Sigma^c=\{ p \in \Sigma;\, X_2^+(p)  X_2^-(p)> 0 \}$,\\ 
\noindent {{\bf stable sliding region:}} $\Sigma^{s}= \{ p \in \Sigma; X_2^+(p)<0, X_2^-(p)>0 \}$ and \\
\noindent {{\bf unstable sliding region:}} $\Sigma^{u}= \{ p \in \Sigma; X_2^+(p)>0, X_2^-(p)<0\}$.\\

When $q \in \Sigma^s$, following the Filippov's convention, the {sliding vector field} associated to $X\in \Omega^r$ is the vector field $\widehat{X}^s$ tangent to $\Sigma^s$, expressed in coordinates as 
\[\widehat{X}^s(q)= \frac{1}{(X_2^- - X_2^+)(q)} ((X_1^+- X_1^-)(q),0),\]which, by a time rescaling, is topologically equivalent to the \textbf{normalized sliding vector field}
\begin{equation}\label{expressao-campo-deslizante}
X^{s}(q)=(X_1^+- X_1^-)(q).
\end{equation}

Note that $X^s$ can be C$^r$-extended to the closure $\overline{\Sigma^s}$ of $\Sigma^s$. The points $q \in \Sigma$ such that $X^s(q)=0$ are called \textbf{pseudo equilibrium} of $X$ and the points $p \in \Sigma$ such that $X^+h(p)X^-h(p)=0$ are called \textbf{tangential singularities} of $X$ (i.e., the trajectory through $p$ is tangent to $\Sigma$). We say that $q\in\Sigma$ is a \textit{regular point} if $q\in \Sigma^c$ or $q\in \Sigma^s$ and $X^s(q)\neq0$.

A tangential singularity $q\in\Sigma$ of $X^-$ is a \textit{fold point} of $X^-$ if $X^-h(q)=0$ but $(X^-)^{2}h(q)\neq0$, visible tangency if $(X^-)^{2}h(q)<0$ and invisible tangency if $(X^-)^{2}h(q)>0$.

\begin{definition}
	The flow $\phi_X$ of $X\in \Omega^r$ is obtained by the concatenation of flows of $X^+,X^-$ and $X^s$, denoted by $\phi_{X^+}, \phi_{X^-}$ and $\phi_{X^s}$, respectively.
\end{definition}

In the following we define non-wandering point of the flow of $X$.

\begin{definition}
	A point $p$ is called non-wandering if for any neighborhood $U$ of $p$ and $T > 0$, there exists some $|t| > T$ such that $\phi_X(t,U)\cap U \neq  \emptyset$.
\end{definition}


\subsection{Families of PSVF}\label{subsecao-familiaPSVF}

To start the analysis of PSVF in the two dimensional torus and sphere, we consider the following families of systems that are locally, in two dimensional euclidean spaces, in a subset of codimension zero in $\Omega^r$ (generic). These families are given in \cite{G-S-T} and \cite{Kuz}. Unfortunately, there is not in the literature an result that provides a generic family of PSVF for compact manifolds.  However, as these families are locally generic in $\R^2$ then they have to be preserved in the study of global dynamics in $\T$ and $\Ss$.

\begin{definition}\label{definicao fold point}
	Let $X=(X^+,X^-) \in \Omega^r$. We say that  $p \in \Sigma$ is a
	\begin{itemize}
		\item [$(a)$] \textbf{fold-regular singularity} of $X$ if $p$ is a fold point of $X^-$ and $X^+(p)$ is transversal to $\Sigma$ at $p$.
		
		\item [$(b)$] \textbf{hyperbolical pseudo equilibrium} of $X^s$ if $p\in \Sigma^s\cup \Sigma^u,X^{s}(p)=0$ and $(X^{s})'(p)\neq 0$.
	\end{itemize}\end{definition}
	
	Based in \cite{G-S-T} and \cite{Kuz} we get the list of codimension zero PSVF in $\Omega^r$:\\
	
	$\bullet$ $\Omega_0(c)= \{X;\, p\in \Sigma \textrm{ is a regular
		point and } p\in \Sigma^c\}$;
	
	$\bullet$ $\Omega_0(s)= \{X;\, p\in \Sigma \textrm{ is a regular
		point and } p\in \Sigma^s\}$;
	
	$\bullet$ $\Omega_0(u)= \{X;\, p\in \Sigma \textrm{ is a regular
		point and } p\in \Sigma^u\}$;
	
	$\bullet$ $\Omega_0(fr)= \{X;\, p\in \Sigma \textrm{ is fold-regular
		singularity}\}$;
	
	$\bullet$ $\Omega_0(h)=\{X;\, p \in \Sigma \textrm{ is a hyperbolic
		pseudo equilibrium of }X^{s}\}$.


\section{Main Results for PSVF}\label{secao resultados principais}

\subsection{Regular-regular case}\label{Subsecao-regular-regular}
Based on \cite{G-S-T} and \cite{Kuz}, we consider the following expression of $X=(X^+,X^-)$ in this case
\[
X^+(x,y)=(a,\sigma_1), X^-(x,y)=(b,\sigma_2),
\]
where $\sigma_1, \sigma_2=\pm1$. The proof of Theorem A is divided in two cases:

\noindent $\bullet$ $\Sigma=\Sigma^c$, that is, $X\in \Omega_{0}(c)$;\\
\noindent $\bullet$ $\Sigma=\Sigma^s\cup \Sigma^u$, that is, $X\in \Omega_{0}(s)\cup \Omega_0(u)$.

\subsubsection{$X\in \Omega_0(c)$}

First of all, we have to define the first return map in this case. We suppose that $X_2^+(p)>0$ and $X_2^-(p)>0$. The construction for the other case, $X_2^+(p)<0$ and $X_2^-(p)<0$, is analogous.

Consider $p\in \Sigma_1$ and $t(p)$ the first positive time such that $\phi_{X^-}(t(p),p)=p_1\in\Sigma_2$. Let be $t(p_1)$ the positive time such that $\phi_{X^+}(t(p_1),p_1)=p_2\in\Sigma_1$. Therefore, the first return map is given by this composition:
\[\varphi_X(p) = \phi_{X^+}(t(p_1),\phi_{X^-}(t(p),p).\]

\begin{theorem}\label{caso-regular}
$(a)$ If $\M=\T$ then the orbits of $X$ are periodic if and only if $a\pm b\in \Q$. Besides, if $a\pm b\notin \Q$ then the torus $\T$ is a non trivial wandering set. 

$(b)$ If $\M=\Ss$ then $\Sigma$ is a crossing region, and all the trajectories of $X$ on $\Ss$ are lines connecting $p_N$ and $p_S$. In particular, the flow of $X$ is $C^0$-equivalent to the flow of a smooth vector field with just two singularities: one attractor and other repeller.
\end{theorem}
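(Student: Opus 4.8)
The plan is to reduce both statements to the dynamics of the first return map $\varphi_X$ on a single switching circle and then read off the conclusions from the classical theory of rigid rotations of $S^1$.

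For part $(a)$ I would first make the return map explicit. Since $X\in\Omega_0(c)$ forces $\sigma_1\sigma_2>0$, I treat the case $\sigma_1=\sigma_2=1$ (the case $\sigma_1=\sigma_2=-1$ being symmetric under time reversal). The flows are affine, $\phi_{X^-}(t,(x,y))=(x+bt,y+\sigma_2 t)$ and $\phi_{X^+}(t,(x,y))=(x+at,y+\sigma_1 t)$, so the transition from $\Sigma_1$ to $\Sigma_2$ along $X^-$ takes time $t=1/2$ and sends $(x,0)\mapsto(x+b/2,1/2)$, while the transition from $\Sigma_2$ back to $\Sigma_1$ along $X^+$ takes time $1/2$ and sends $(x+b/2,1/2)\mapsto(x+(a+b)/2,1)$. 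Using the torus identification $y=1\sim y=0$, this shows that in the coordinate $x$ of $\Sigma_1\cong S^1$ the map $\varphi_X$ is the rigid rotation $R_\alpha(x)=x+\alpha\pmod 1$ with $\alpha=(a+b)/2$ (the sign depending on the $\sigma_i$-conventions, which is what produces the $a\pm b$ in the statement). The key point is that $\alpha$ does not depend on the base point, so the whole dynamics is governed by a single rotation.

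Then I would invoke the dichotomy for circle rotations. A trajectory of $X$ is periodic exactly when its trace on $\Sigma_1$ is a periodic point of $\varphi_X$, and $R_\alpha^{\,n}(x)=x+n\alpha$ returns to $x$ for some $n\geq 1$ iff $\alpha\in\Q$, i.e. iff $a\pm b\in\Q$; since this condition is independent of $x$, either every orbit is periodic or none is. When $\alpha\notin\Q$ the orbit of every point under $R_\alpha$ is dense (and equidistributed) in $\Sigma_1$; lifting this through the affine flow segments in $\M^+$ and $\M^-$ shows that each trajectory is dense in $\T$, so no orbit closes up and the motion wanders densely through the whole torus, which is the non-trivial behavior asserted in the statement. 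This is exactly the Kronecker--Weyl alternative, obtained here directly from $\varphi_X$.

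For part $(b)$ there is a single switching circle $\Sigma=\Sigma_2$, and the same computation shows that every trajectory crosses $\Sigma_2$ transversally. Following it forward and backward with the affine flows, the $y$-coordinate increases (resp. decreases) monotonically until it reaches $y=1$ or $y=0$, i.e. the collapsed poles $p_N,p_S$, which by Remark~\ref{observacao-const-esfera} are the only singularities. Hence the phase portrait consists of the two singularities together with a family of orbits heteroclinic from one pole to the other, which is precisely the orbit structure of a north--south (gradient-like) flow; I would then build the $C^0$-equivalence orbit by orbit, matching each pole-connecting trajectory of $X$ with a meridian of the standard field and sending $p_N,p_S$ to the sink and source. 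The main obstacle I anticipate is not the computation but the bookkeeping at the non-smooth points: one must check that the concatenated Filippov flow is a genuine (invertible) topological flow across the crossing region so that $\varphi_X$ is well defined, and, on the sphere, that the corners of the orbits on $\Sigma_2$ and the fact that $p_N,p_S$ lie outside every chart (Remark~\ref{observacao-const-esfera}) do not obstruct the construction of the topological equivalence.
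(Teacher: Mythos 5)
Your proposal is correct, and for the periodicity criterion in part $(a)$ it follows essentially the paper's own route: the paper also computes the transition maps $\Sigma_1\to\Sigma_2\to\Sigma_1$ explicitly for the affine flows, writes the iterated return map in closed form (its equation \eqref{equacao-fluxo-RR-costura}), and extracts the condition $\frac{n_0}{4}\bigl(\frac{a\sigma_2+b\sigma_1}{\sigma_1\sigma_2}\bigr)\in\Z$, i.e.\ $a\pm b\in\Q$; your observation that under the crossing hypothesis $\sigma_1\sigma_2>0$ only the combination $a+b$ actually occurs, the $\pm$ being an artifact of sign conventions, is consistent with this. Where you genuinely diverge is the irrational case: the paper does not use rotation theory there, but argues that for $a\pm b\notin\Q$ the orbits are non-periodic and $X$ has no singular points, and then invokes the Poincar\'e--Bendixson theorem for PSVF of \cite{Buzzi-Carvalho-Eusebio} to conclude density of orbits. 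You instead identify $\varphi_X$ with the rigid rotation $R_{(a+b)/2}$ of the circle $\Sigma_1$, quote the classical Kronecker dichotomy (all orbits dense when the rotation number is irrational), and lift density to $\T$ along the affine flow segments. Your route is more elementary and self-contained, and it is arguably on firmer ground: Poincar\'e--Bendixson-type results, including the cited PSVF version, are planar theorems, and their use to produce dense orbits on the torus --- where non-trivial recurrence is precisely what makes the classical Poincar\'e--Bendixson theorem fail --- is the most delicate step of the paper's argument; the rotation argument replaces it by a standard fact. For part $(b)$ the paper's proof is a single sentence referring to the model of $\Ss$ and Remark \ref{observacao-const-esfera}, so your more detailed account (transversality of every crossing, monotonicity of the $y$-coordinate, orbit-by-orbit construction of the $C^0$-equivalence with a north--south flow, and the care needed at the poles and at the crossing points) supplies exactly what that sentence leaves implicit.
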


\begin{proof}
We initially prove item $(a)$. Note that the existence of periodic orbits is equivalent to the existence of $n_0\in \N$ such that $\varphi_X^{n_0}(p)=kp$, where $k\in \N$ and $p\in\Sigma_1$.


Given $p=(x,0)\in \Sigma_1$ we get $\phi_{X^-}(p)=(a/2\sigma_1+x,1/2)=(x_2,1/2)$ and $\phi_{X^+}(x_2,1/2)=(a/2\sigma_1+b/2\sigma_2+x,1)=(x_3,1)$.

In this case, the global dynamics of $X$ is given by the first return map and we can exhibit explicitly:
\begin{equation}\label{equacao-fluxo-RR-costura}
\varphi_X^{n}(x,0)=\overline{\left( \frac{n}{2}\left(\frac{a}{2\sigma_1}+\frac{b}{2\sigma_2}\right) +x, \frac{n}{2} \right)},  
\end{equation}for $n\geq 2$, where the overline denotes the representant of the class of equi\-va\-len\-ce, defined in \eqref{relacao-equivalencia}.

Therefore, the necessary and sufficient condition to get periodic orbits, in this case, is $\overline{\varphi_X^{n_0}(x,0)}=\overline{(x,0)}$, for some $n_0\in\N$. By \eqref{equacao-fluxo-RR-costura}, the last equation is satisfied if and only if 
\[
\frac{n_0}{4}\left(\dis\frac{a\sigma_2+b\sigma_1}{\sigma_1\sigma_2}\right)\in \Z,
\]
or equivalently,
\[
a\pm b\in \Q.
\]
So, we conclude that there exist periodic orbits for $X$ if and only if $a\pm b\in \Q$. Otherwise, if $a\pm b\notin \Q$ then the orbits of $X$ are not periodic and $X$ does not have singular points. Therefore, by applying the version of {\it Poincar\'e-Bendixson Theorem} for PSVF in \cite{Buzzi-Carvalho-Eusebio}, we conclude that the orbits of $X$, in this case, are dense. This ends the proof of (a).

The proof of Item $(b)$ follows by the construction of model of the sphere where $p_N$ and $p_S$ are singularities, see Remark \ref{observacao-const-esfera}.
\end{proof}

%

\begin{example}[Limit cycles]\label{ex2.2}
Note that in Theorem \ref{caso-regular}, if we do not take $X$ in normal form, just regular-regular, the result is no longer valid. In particular, we can have cycle limits for PSVF on $\M=\T$: consider the vector field
\[X_\alpha(x,y)=\left\{
\begin{array}{lcl}
(\cos(2\pi x),1)&,&(x,y)\in [0,1]\times \left[\frac12,1\right],\\
(1,\alpha)&,&(x,y)\in [0,1]\times \left[0,\frac12\right],
\end{array}
\right.
\]
with $\alpha>0$ to be defined below.
Note that the lines $\gamma_1(t)=(\frac14,t)$, $t\in [\frac12,1]$ and $\gamma_2(t)=(\frac34,t)$, $t\in [\frac12,1]$ are invariants.

Let $\epsilon>0$ small and consider the point $p=(\frac34-\epsilon,\frac12)$. Take the integral curve of the flow through $p$ and let $q=(x_1,1)$ be its intersection with the segment $[0,1]\times \{1\}$.  Let $\tilde q=(x_1,0)$ and $v=p-\tilde q=(v_1,v_2)$. By our construction, $v_1\neq 0$ and $v_2\neq 0$. Take $\alpha=\dfrac{v_2}{v_1}$. The trajectory $\gamma$ of $\tilde q$ by the flow of $X_\alpha$, with this choice of $\alpha$, is closed.

Now take a point $z\in [0,1]\times\{0\}$ between $(\frac14,0)$ and $\tilde q$. It is easy to see that the trajectory of $z$ by the flow of $X_\alpha$ is not closed and approaches $\gamma$. Then the same holds with the trajectories of points in the left side of $\tilde q$, but in a small neighborhood. 

If we take a point in the right side of $\tilde q$, the same property still holds. Then $\gamma$ is a limit cycle. See Figure \ref{fig-cl} for a representation of this limit cycle.
\begin{figure}[h]
\begin{center}
\begin{overpic}[width=3.5cm,bb=0 0 176 212]{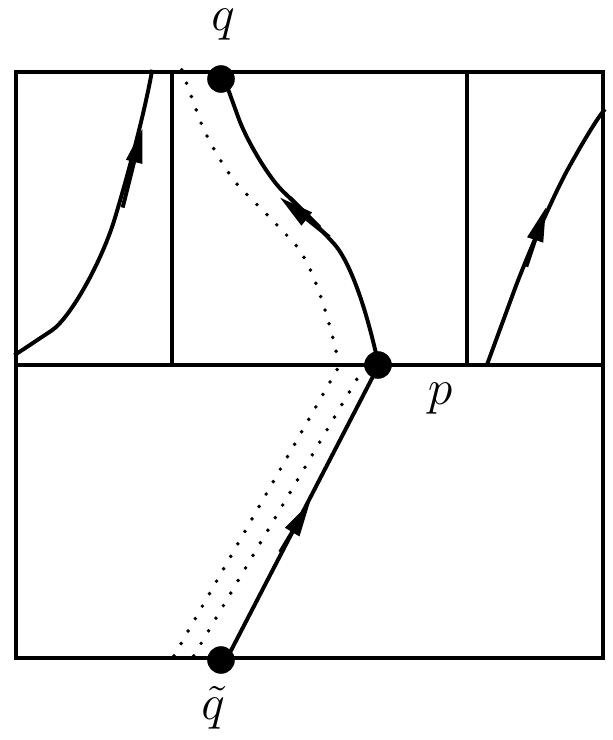}
\put(15,26){$\gamma$} 
\end{overpic}
\end{center}
\caption{Limit cycle of system presented in Example \ref{ex2.2}.}\label{fig-cl}
\end{figure}
\end{example}

\subsubsection{$X\in \Omega_0(s)\cup  \Omega_0(u)$}

Note that as $X\in \Omega_0(s)\cup  \Omega_0(u)$ then the sign of $\sigma_1$ determines the sign of $\sigma_2$.

\begin{theorem}\label{caso-regular-deslize}
Consider $\M=\T$ or $\M= \Ss$. If $\sigma_1<0$ (resp. $\sigma_1>0$) then $\Sigma_2$ is a stable sliding region (resp. unstable sliding region). In both cases $\Sigma_2$ is itself a closed trajectory, solution of the constant differential equation
\[\left\{
\begin{array}{lcl}
\dot x&=&(\sigma_2 a-\sigma_1 b)/(\sigma_2-\sigma_1),\\
\dot y&=&0.
\end{array}\right.
\]
Besides then:

$(a)$ if $\M=\T$ and 
\begin{itemize}
\item [$(i)$] $\sigma_1>0$ then $\Sigma_1$ (resp. $\Sigma_2$) is a global attractor (resp. global repeller) for $X$. 
\item [$(ii)$] $\sigma_1<0$ then $\Sigma_1$ (resp. $\Sigma_2$) is a global repeller (resp. global attractor) for $X$. 
\end{itemize}

$(b)$ if $\M=\Ss$ and $\sigma_1<0$ (resp. $\sigma_1>0$) then the flow of $X$ is $C^0$-equivalent to the flow of a vector field on $\Ss$ with two repelling singularities and one attracting closed orbit (resp. two attracting singularities and one repelling closed orbit).
\end{theorem}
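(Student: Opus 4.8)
The plan is to exploit the fact that here both $X^+=(a,\sigma_1)$ and $X^-=(b,\sigma_2)$ are \emph{constant} fields, so every trajectory in the open regions $\Sigma^+$ and $\Sigma^-$ is a straight segment in the square model and the entire dynamics can be read off by explicit integration. First I would classify $\Sigma_2$. Since $h_2(x,y)=y-\tfrac12$ has gradient $(0,1)$, we get $X^+h_2=\sigma_1$ and $X^-h_2=\sigma_2$, and because $X\in\Omega_0(s)\cup\Omega_0(u)$ forces $\sigma_1\sigma_2<0$, the two normal components have opposite signs; comparing with the definitions of $\Sigma^s,\Sigma^u$ gives at once that $\sigma_1<0$ yields $X^+h_2<0<X^-h_2$ (stable sliding) and $\sigma_1>0$ yields unstable sliding. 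To obtain the sliding equation I would write the Filippov field as the convex combination $\lambda X^++(1-\lambda)X^-$ whose second coordinate vanishes, solve $\lambda\sigma_1+(1-\lambda)\sigma_2=0$, and substitute into the first coordinate; this produces exactly $\dot x=(\sigma_2 a-\sigma_1 b)/(\sigma_2-\sigma_1)$, $\dot y=0$ (which, using $\sigma_1=-\sigma_2$, is the constant $(a+b)/2$). As this field is constant, generically nonzero, and tangent to the circle $\Sigma_2$, its trajectory sweeps out all of $\Sigma_2$, so $\Sigma_2$ is a closed trajectory; the identical computation on $\Sigma_1$ gives the analogous statement there.

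For the torus (item $(a)$) I would analyze the two open annuli separately. In $\Sigma^-$ ($0<y<\tfrac12$) the field is $X^-=(b,\sigma_2)$ with $\dot y=\sigma_2$ of constant sign, so every orbit crosses the annulus and meets $\Sigma_1$ or $\Sigma_2$ in finite time according to that sign; likewise in $\Sigma^+$ ($\tfrac12<y<1$) with $X^+=(a,\sigma_1)$. The only delicate point is the identification $y=0\sim y=1$, under which $\Sigma_1$ is bounded below by $\Sigma^+$ and above by $\Sigma^-$, so the roles of $X^+$ and $X^-$ are interchanged there. Carrying this out shows that when $\sigma_1>0$ both constant fields point toward $\Sigma_1$ and away from $\Sigma_2$, making $\Sigma_1$ a stable sliding circle reached in finite positive time by every orbit and $\Sigma_2$ its time-reversed counterpart, i.e. $\Sigma_1$ a global attractor and $\Sigma_2$ a global repeller; the case $\sigma_1<0$ is symmetric.

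For the sphere (item $(b)$), by Remark \ref{observacao-const-esfera} the poles $p_N,p_S$ are singularities, and $\Sigma=\Sigma_2$ is the only switching set. In each hemisphere $\dot y$ again has constant sign, so every non-polar trajectory runs monotonically in $y$ between a pole and the equator: for $\sigma_1<0$ (hence $\sigma_2>0$) orbits leave $p_S$ and $p_N$ and accumulate on the stable sliding circle $\Sigma_2$, giving two repelling singularities and one attracting closed orbit, and dually for $\sigma_1>0$. To finish I would construct the $C^0$-equivalence by mapping this phase portrait onto the standard field on $\Ss$ having two sources at the poles and an attracting equatorial limit cycle, matching the two polar singularities, the equator $\Sigma_2$, and the monotone connecting arcs.

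I expect this last step to be the main technical obstacle: producing an explicit homeomorphism that respects the sliding segment on $\Sigma_2$ while collapsing the model's artificial polar edges to the singular points $p_N,p_S$, and checking it sends trajectories to trajectories preserving their orientation. Everything else reduces to the elementary integration of constant vector fields and the sign bookkeeping for the Filippov classification, so the analytic content is light; the care is entirely in the gluing at the switching circles and at the poles.
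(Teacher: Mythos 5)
Your proposal is correct and takes essentially the same route as the paper's proof: classify $\Sigma_1,\Sigma_2$ as stable/unstable sliding regions from the signs of $\sigma_1,\sigma_2$, observe that the sliding dynamics is a constant nonzero field tangent to the switching circle (hence each such circle is a closed trajectory), and use the constant sign of $\dot y$ in each component to obtain the global attractor/repeller structure on $\T$ and the pole-to-equator, north--south-type dynamics on $\Ss$. If anything you are more careful than the paper: your convex-combination derivation produces exactly the formula $(\sigma_2 a-\sigma_1 b)/(\sigma_2-\sigma_1)$ stated in the theorem (the paper instead cites its ``normalized'' sliding field $X^s=X_1^+-X_1^-$, which does not literally give that expression), and the final $C^0$-equivalence on the sphere that you flag as the remaining technical step is likewise left implicit in the published proof.
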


\begin{proof}
We consider firstly $\sigma_1<0$. In this case $\Sigma_{1},\Sigma_{2}$ coincides with $\Sigma^{u},\Sigma^{s}$, respectively.  In $\Sigma_{2}$, the normalized sliding vector fields $X^{s}$, given in \eqref{expressao-campo-deslizante}, is expressed explicitly as
\[\left\{
\begin{array}{lcl}
\dot x&=&(\sigma_2 a-\sigma_1 b)/(\sigma_2-\sigma_1),\\
\dot y&=&0.
\end{array}\right.
\]
Note that $X^{s}$ is regular and when its trajectory reaches the point $(1,1/2)$, by equi\-va\-len\-ce class, is identified with $(0,1/2)$ and follows again the flow. Therefore, we say that the orbit of $X^{s}$ coincides with the region $\Sigma_{2}$. In $\Sigma_{1}$, when $\M=\T$, we have similar behavior and we conclude that $\Sigma_{1}$ and $\Sigma_{2}$ are periodic orbits. Besides, $\Sigma_{1},\Sigma_{2}$ is a global repeller for $X$ (unstable limit cycle) and global attractor for $X$ (stable limit cycle), respectively.

In case when $\M=\Ss$ then there exists singularities $p_N$ and $p_S$ whose stability is determined by the sing of $\sigma_1$. More precisely, if $\sigma_1>0, \sigma_1<0$ then $\Sigma_2$ is an unstable sliding region, stable sliding region, respectively. Therefore, if $\sigma_1>0, \sigma_1<0$ then $p_N, p_S$ is an attractor, repeller, respectively.
\end{proof}


\subsection{Fold-regular case}\label{Subsecao-Dobra-regular}

We suppose that $X^+$ is a regular vector fields and $X^-$ presents fold singularities in $\Sigma_1\cup \Sigma_2$.

\begin{lemma}\label{obs-conecta}
Generically it does not exist trajectories of $X^-$ connec\-ting two fold singularities in $\Sigma_1\cup \Sigma_2$. 
\end{lemma}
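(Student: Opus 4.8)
The plan is to prove that the set of fields $X=(X^+,X^-)$ for which $X^-$ admits a trajectory joining two fold singularities is contained in a set of positive codimension in $\Omega^r$, so that its complement is open and dense. First I would recall that a fold point of $X^-$ on $\Sigma\in\{\Sigma_1,\Sigma_2\}$ is a point $q$ with $X^-h(q)=0$ and $(X^-)^2h(q)\neq0$. Since $\Sigma$ is one-dimensional and the fold condition makes the zeros of the function $q\mapsto X^-h(q)$ nondegenerate, the fold set is finite, say $\{q_1,\dots,q_n\}$. The crucial observation is that both the location and the type of each $q_k$ are determined by $X^-$ in a neighborhood of $\Sigma$; hence any perturbation of $X^-$ supported away from $\Sigma$ leaves the entire fold set invariant. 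This robustness is what makes the perturbation scheme below work.

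Next I would phrase the existence of a connection as the condition $\phi_{X^-}(T,q_i)=q_j$ for some time $T$ and some pair $i\neq j$ (or $i=j$, for a fold loop). The orbit of the fold point $q_i$ is a one-parameter curve in $\overline{\M^-}$, while the target $q_j$ is a prescribed point, so matching both coordinates amounts to two scalar equations against the single free parameter $T$; thus the connection locus has codimension one. This dimension count is only heuristic, and the content of the proof is to realize it through an explicit transversal perturbation.

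For density, suppose $X^-$ possesses a connecting arc $\gamma$ from $q_i$ to $q_j$. The arc meets $\Sigma$ tangentially at its endpoints and, generically, lies in $\mathrm{int}(\M^-)$ along its interior, so I would choose an interior point $p_0=\gamma(\tau)$ and a small ball $B\ni p_0$ with $\overline B\cap\Sigma=\emptyset$, with $\overline B$ also disjoint from the other fold points and fold arcs. Perturbing $X^-$ to $X^-+s\,w$ with $w$ supported in $B$ then vanishes near $\Sigma$, hence leaves the fold set fixed, while displacing the forward orbit of $q_i$. By the first-variation equation, the $s$-derivative of the point at which this orbit next returns to $\Sigma$ is obtained by transporting $w$ along $\gamma$ by the linearized flow; choosing the direction of $w$ suitably forces this derivative to have nonzero component along $\Sigma$, so the return point moves off $q_j$ to first order. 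Treating the finitely many ordered pairs $(q_i,q_j)$ in succession, with perturbations supported in mutually disjoint balls, removes all connections at once, giving density.

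Openness of the no-connection condition follows from continuous dependence of both the fold set and of the orbit arcs on compact time intervals upon $X^-$, together with finiteness of the fold set and compactness of $\M$: if the return point of each fold orbit stays a definite distance from every fold point, the same holds after a small perturbation. Combining openness and density yields an open and dense set of $X$ with no fold--fold connection, which is the asserted genericity. The step I expect to be the main obstacle is the first-variation argument: one must check that a perturbation confined to $\mathrm{int}(\M^-)$, and therefore unable to move the fold points, nevertheless moves the return point transversally along $\Sigma$, and one must control the possibility that the connecting orbit grazes or re-meets $\Sigma$ before reaching $q_j$, which requires following the arc up to its first tangential return.
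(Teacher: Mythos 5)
Your proposal is correct in its essentials, but it follows a genuinely different and considerably more rigorous route than the paper. The paper's own proof is a short heuristic: a trajectory $\gamma$ of $X^-$ joining two folds is a distinguished trajectory that must be preserved by any topological equivalence, this behavior is not robust, and its unfolding (illustrated by a figure) requires one parameter; hence the connection is a codimension-one phenomenon and is assumed not to occur. You instead realize this codimension count by an explicit transversality scheme in the spirit of the proof of Peixoto's theorem: finiteness of the fold set (correct, since folds are nondegenerate zeros of $X^-h$ restricted to the compact curves $\Sigma_1\cup\Sigma_2$), a bump perturbation supported in a ball $B\subset \mathrm{int}(\M^-)$, which leaves the fold set untouched because folds are determined by $X^-$ near $\Sigma$, a first-variation argument giving density, and a continuity argument for openness. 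What the paper's approach buys is brevity and the identification of the associated bifurcation; what yours buys is an actual open-and-dense statement, which the paper asserts but does not prove.

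Two refinements are needed to close the issues you yourself flag, and both are standard. First, near $q_j$ the return to $\Sigma$ is tangential, so the "return point" is not a smooth (or even well-defined) function of the perturbation parameter: the perturbed orbit of $q_i$ may cross $\Sigma$ transversally near $q_j$ or miss $\Sigma$ entirely. The fix is to measure the splitting on a small section transversal to the flow (not to $\Sigma$) placed along $\gamma$ between $B$ and $q_j$, comparing the perturbed forward orbit of $q_i$ with the backward orbit of $q_j$; the latter does not meet $B$ and is therefore unperturbed, and a nonzero Melnikov-type integral separates the two, which destroys the connection in either of the two outcomes above. If $\gamma$ has intermediate tangential returns, place $B$ and the section before the first of them. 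Second, your openness argument tacitly assumes that every fold orbit returns to $\Sigma$ in finite time; since $\M^-$ is an annulus (torus case) or a disk (sphere case), Poincar\'e--Bendixson allows a fold orbit to stay in $\mathrm{int}(\M^-)$ forever, e.g.\ spiralling onto a semi-stable cycle, and at such a field connection-freeness need not be open. One must therefore intersect your condition with a further generic condition (hyperbolicity of the internal closed orbits of $X^-$), or settle for residual rather than open-dense. Neither point invalidates your plan, and both are at a level of rigor well beyond the proof actually given in the paper.
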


\begin{proof}
In fact, the trajectory $\gamma$ connecting two tangency points it is a distinguish trajectory in the sense that $\gamma$ most be preserved by a topological equivalence. However, this is a not robust behavior and it is necessary one parameter to get the unfolding of this dynamic. In Figure \ref{bifurcacao-dobra} we present a geometric approach to this unfolding. So, the presence of this trajectory $\gamma$ provides a global bifurcation of codimension one. Therefore, we assume that does not exist a trajectory connecting tangential singularities.
\begin{figure}[h]
	\begin{center}
		\begin{overpic}[width=9cm]{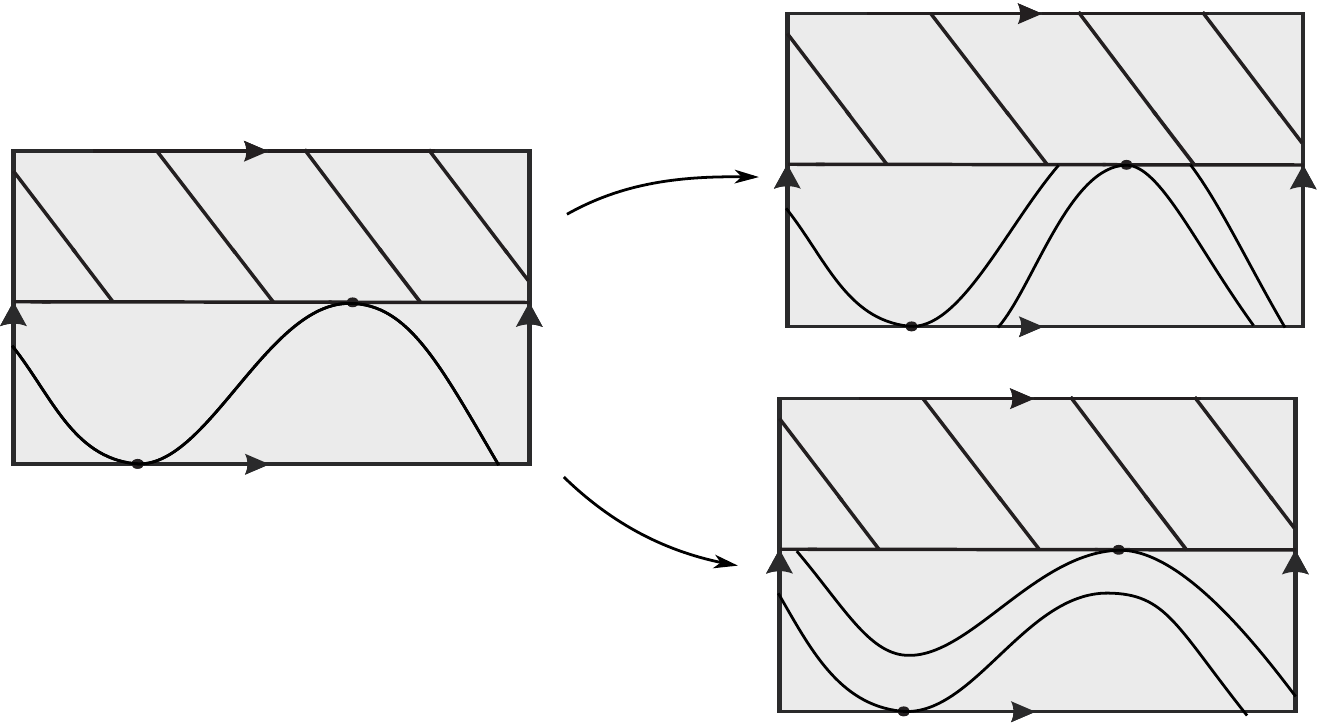}
			\put(15,26){$\gamma$} 
		\end{overpic}
	\end{center}
	\caption{The unfolding of a trajectory connecting two fold singularities.}\label{bifurcacao-dobra}
\end{figure}
\end{proof}

The next result provides a relation between the existence of invisible and visible fold singularities of $X^-$.

\begin{lemma}\label{corresp-fold}
Consider $\M=\T$. For each visible/invisible singularity of $X^{-}$ on $\Sigma_i$ there exists another invisible/visible singularity of $X^{-}$ on $\Sigma_j$ ($i\neq j$).
\end{lemma}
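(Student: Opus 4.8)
The plan is to reduce the statement to a study of the single smooth field $X^-$ on the closed cylinder $\M^-=S^1\times[0,\tfrac12]$, whose two boundary circles are exactly $\Sigma_1$ (at $y=0$) and $\Sigma_2$ (at $y=\tfrac12$). This is the feature special to $\M=\T$: for $\Ss$ the set $\M^-$ is a disk with a single boundary circle, which is why the lemma is stated only for the torus. On this cylinder $X^-$ has no equilibria, so the tangential singularities on $\Sigma_1\cup\Sigma_2$ are precisely the zeros of the restriction of the second component $X_2^-$ to each boundary circle. First I would note that generically $0$ is a regular value of $X_2^-|_{\Sigma_k}$, so on each circle these zeros are simple and finite; a simple zero forces $X_1^-\neq 0$ there, which is exactly the fold non-degeneracy condition, and it makes $X_2^-$ (hence the entering/exiting character of the flow) alternate between consecutive folds. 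In particular the number of folds on each circle is even.

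Next I would record the analytic criterion for the type. Since $\nabla h_1=\nabla h_2=(0,1)$, at a fold one has $(X^-)^2h=X_1^-\,\partial_x X_2^-$ on both circles, so the visible/invisible label is the sign of this single expression. The key geometric remark, and the heart of the lemma, is that $\M^-$ lies on the side $h_1>0$ of $\Sigma_1$ but on the side $h_2<0$ of $\Sigma_2$. Consequently the very same sign of $(X^-)^2h$ describes a tangency whose orbit leaves $\M^-$ on one circle and a tangency whose orbit stays inside $\M^-$ on the other: a \emph{visible} fold in the sense of the definition is an outer (external) tangency of the cylinder on $\Sigma_1$ but an inner tangency on $\Sigma_2$. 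This asymmetry is what converts a symmetric global count into the asserted swap of types.

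Then I would produce the global count. Writing $v_k,u_k$ for the numbers of visible/invisible folds on $\Sigma_k$, the signed fold count along a boundary circle equals twice the winding number $d_k$ of the direction field $X^-/|X^-|$ along that circle, namely $u_k-v_k=2d_k$. Because $X^-$ is nonvanishing on the cylinder and the two boundary circles are freely homotopic inside $\M^-$, the restricted direction fields are homotopic, so $d_1=d_2$; equivalently, applying the Poincar\'e--Hopf/Morse index formula on $\M^-$ (where $\chi(\M^-)=0$ and there are no interior zeros) shows that the total number of outer tangencies equals the total number of inner ones. Translating through the side asymmetry of the previous paragraph, this yields the balance between $\{v_1,u_1\}$ and $\{v_2,u_2\}$ that underlies the correspondence.

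The hard part will be upgrading this counting identity to the one-to-one correspondence asserted in the statement. For this I would track the orbits of $X^-$ between the two circles: an inner tangency launches a family of orbits that rise across the cylinder, and following them to the opposite boundary produces a matching fold, the swap of type being forced by the change of side established above. Here Lemma \ref{obs-conecta} is essential, since it guarantees (generically) that no orbit joins two folds, so the pairing is realized by transversal transitions and is well defined; the evenness hypothesis is what prevents these transitions from degenerating into a whole circle of tangencies. Making this pairing precise, rather than merely matching parities and counts, is the only genuinely delicate step.
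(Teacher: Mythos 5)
Your preliminaries are correct and in fact sharpen what the paper leaves implicit: the reduction to the cylinder $\M^-$, the formula $(X^-)^2h=X_1^-\,\partial_x X_2^-$ at a tangency, and above all the side asymmetry ($\M^-$ lies in $h_1\geq 0$ but in $h_2\leq 0$), which is indeed the mechanism behind the visible/invisible swap. However, the part of the argument you actually carry out --- the winding--number count --- cannot prove the lemma even in principle. With $v_k,u_k$ the numbers of visible/invisible folds on $\Sigma_k$, your identities $u_k-v_k=2d_k$ and $d_1=d_2$ (equivalently the Poincar\'e--Hopf balance $u_1+v_2=v_1+u_2$) amount to a single linear relation, whereas the lemma asserts a correspondence, i.e.\ $v_1=u_2$ and $u_1=v_2$; nothing in the count forces the folds to distribute over \emph{both} circles at all. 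Concretely, $X^-(x,y)=\bigl(1,(1-2y)\cos(2\pi x)+2y\bigr)$ is nonvanishing on $\M^-$, has exactly two folds on $\Sigma_1$ (one of each type, at $x=\tfrac14$ and $x=\tfrac34$) and \emph{no} tangency on $\Sigma_2$; it satisfies your counting identity ($1+0=1+0$, both winding numbers zero), while the asserted partner folds on $\Sigma_2$ simply do not exist. In this example the locus $\{X_2^-=0\}$ is an arc that leaves the visible fold of $\Sigma_1$, turns around at height $y=\tfrac14$, and returns to the invisible fold of $\Sigma_1$: same-circle pairings are perfectly compatible with all of your topological bookkeeping.

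Consequently everything rests on your last paragraph, which you yourself flag as ``the only genuinely delicate step'' and do not carry out --- and that step is not a refinement of the count: it \emph{is} the lemma, and it is exactly what the paper's proof consists of. The paper does not count at all; starting from a fold $p_i\in\Sigma_1$ it continues the tangency locus (the curve $\mathcal{C}_i(\eta)$ of tangency points of nearby trajectories, a branch of $\{X^-h_2=0\}$) across the half torus until it meets $\Sigma_2$ transversally, the meeting point being the partner fold, with the type swap coming from precisely your side-asymmetry remark. Your sketch of ``following the orbits launched at an inner tangency to the opposite boundary'' runs into the obstruction exhibited above: the family of tangency points of those orbits may bend back to the same circle, and Lemma \ref{obs-conecta} does not exclude this, since the obstruction is not a trajectory joining two folds but the geometry of the set $\{X_2^-=0\}$. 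So the proposal has a genuine gap at its decisive step; closing it requires an argument (or an extra hypothesis) guaranteeing that the tangency locus actually crosses from $\Sigma_1$ to $\Sigma_2$ --- a point on which, it should be said, the paper's own proof is also silent.
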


\begin{proof}
For each $p_i\in \Sigma_1$, a visible fold singularity of $X^-$, we consider $\Sigma_i^-, \Sigma_i^+$ transverse sections to $\Sigma_1$ at left, right, respectively, of $p_i$, see Figure \ref{dobra-visivel-invisivel}. Let be $q_i^{\pm}$ the intersection between the trajectory passing through $p_i$ with $\Sigma_i^{\pm}$. For each point $q_{i}^-(\eta)\in \Sigma_i^-$, above of $q_{i}^-$ we get the trajectory $\gamma_i(\eta)$ and a unique point $p_i(\eta)$ such that $X^-h_2(p_i(\eta))=0$. Besides, the points $p_{i}(\eta)$ varies continuously according $q_{i}^-(\eta)$. Therefore, we obtain a curve $\mathcal{C}_i(\eta)$ in an half torus (the region between $\Sigma_1$ and $\Sigma_2$) such that $X^-h_2(\mathcal{C}_i(\eta))=0$ and by construction, $\mathcal{C}_i(\eta)$ is transversal to $\Sigma_2$ at a point $\eta_i$, which is an invisible fold singularity of $X^-$. So, we conclude the proof. 
\end{proof}

\begin{figure}[h]
\begin{center}
\begin{overpic}[width=4.5cm]{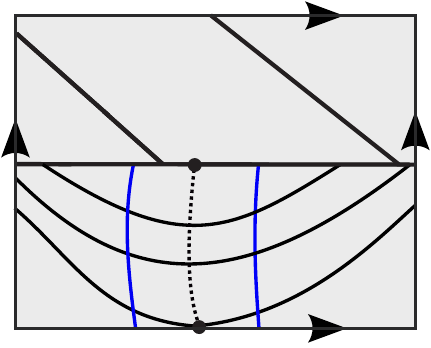}
\put(19,17){$\Sigma_i^-$} \put(60,17){$\Sigma_i^+$} \put(38,15){$\mathcal{C}_i(\eta)$} \put(40,0){$p_i$} \put(40,45){$\eta_i$}
\end{overpic}
\end{center}
\caption{The construction of transverse sections $\Sigma_i^{\pm}$ and curve $\mathcal{C}_i(\eta)$ on $\T$.}\label{dobra-visivel-invisivel}
\end{figure}
In this case, the main result is the following:

\begin{theorem}\label{tns}
$(a)$ If $\M=\T,\Ss$ and the number of fold singularities of $X^-$ in $\Sigma_{2}$ is odd then

\begin{itemize}
\item [$(i)$] there exists a curve of singular points of $X^-$ given by $\{(0,y); y\in \R\}$ or 
\item [$(ii)$] $\langle X^-(0,y),\nabla h_{2}(0,y) \rangle =0$ for all $y\in \R$. In other words, the straight line $\{(0,y); y\in \R\}$ is composed by tangential singularities of $X^-$ with $\Sigma_{2}$.
\end{itemize}

$(b)$  If $\M=\T,\Ss$ and the number of fold singularities of $X^-$ in $\Sigma_{2}$ is $2n$ then $\Sigma_1$ and $\Sigma_2$ breaks into $2n$ regions each. According to the orientations, the regions in $\Sigma_1,\Sigma_2$ change between crossing and stable sliding region or crossing and unstable sliding region.
\label{prop-Dobra-Regular}\end{theorem}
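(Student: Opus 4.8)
The plan is to reduce the analysis on each boundary circle $\Sigma_i$ to the study of a single function on $S^1$. Since $\nabla h_i=(0,1)$, we have $X^-h_i=\langle X^-,\nabla h_i\rangle=X_2^-$, so a tangency of $X^-$ with $\Sigma_i$ is exactly a zero of the smooth function $g_i(x)=X_2^-(x,y_i)$ on the circle $\Sigma_i\cong S^1$ (with $y_1=0$, $y_2=\tfrac12$). At such a zero $q$ one computes $(X^-)^2h_i(q)=X_1^-(q)\,\partial_x X_2^-(q)=X_1^-(q)\,g_i'(q)$, so the fold-regular condition $(X^-)^2h_i(q)\neq0$ holds precisely when the zero is simple ($g_i'(q)\neq0$, with $X_1^-(q)\neq0$), and the sign of $(X^-)^2h_i(q)$ distinguishes visible from invisible tangencies. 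Thus folds on $\Sigma_i$ are exactly the transversal zeros of a smooth function on a circle.

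For part $(a)$ I would run a parity argument. A smooth function on $S^1$ returns to its initial value after one turn, so its number of sign changes, i.e. its number of simple zeros, is necessarily even; hence if $0$ is a regular value of $g_2$ the number of folds on $\Sigma_2$ is even. Contrapositively, an odd number of folds forces $0$ to be a non-regular value of $g_2$, so the tangency set $T=\{X_2^-=0\}\cap\M^-$ cannot be a curve meeting $\Sigma_2$ transversally in isolated points: a component of $T$ is forced into the degenerate configuration of a fiber $\{x=x_0\}$ of the foliation $\{x=\mathrm{const}\}$. After rotating the coordinate on the circle we may take $x_0=0$, giving $X_2^-(0,y)=X^-h_2(0,y)=0$ for all $y$, which is conclusion $(ii)$; if in addition $X_1^-(0,y)=0$ along the same fiber then $X^-(0,y)=0$ and we are in conclusion $(i)$. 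The dichotomy between $(i)$ and $(ii)$ is simply whether $X_1^-$ also vanishes on this fiber.

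For part $(b)$ the count is $2n$, so $0$ is a regular value of $g_2$ and the folds $x_1<\cdots<x_{2n}$ split $\Sigma_2\cong S^1$ into $2n$ open arcs on which $X_2^-=g_2$ has constant, alternating sign, the alternation closing up consistently precisely because $2n$ is even. Since $X^+$ is a linear flow transversal to $\Sigma$, its normal component $X_2^+$ is nowhere zero, hence of constant sign on the connected circle $\Sigma_2$. Substituting the two signs into $\Sigma^c=\{X_2^+X_2^->0\}$, $\Sigma^s=\{X_2^+<0<X_2^-\}$, $\Sigma^u=\{X_2^+>0>X_2^-\}$ shows that consecutive arcs alternate between crossing and sliding, the sliding type being stable when $X_2^+<0$ and unstable when $X_2^+>0$; this gives the $2n$ alternating regions. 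For $\M=\T$ the identical bookkeeping applies to $\Sigma_1$, whose folds are in bijection with those of $\Sigma_2$ by Lemma~\ref{corresp-fold} (so there are again $2n$). For $\M=\Ss$ there is only the circle $\Sigma_2=\Sigma$ and $\M^-$ is a disc centered at $p_S$, and the same arc decomposition yields the conclusion directly.

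The main obstacle is the step in part $(a)$ that upgrades the parity obstruction into the precise statement that an \emph{entire} fiber $\{x=0\}$ lies in the tangency (or singular) set, rather than merely the existence of some non-generic tangency. Making this rigorous requires controlling the global shape of $T$ inside the annulus (resp. disc) $\M^-$, using the continuation of tangency points across $\M^-$ as in Lemma~\ref{corresp-fold}, to show that the only way the visible/invisible alternation around $\Sigma_2$ can fail to close up is through such a fiber, and that a coordinate change normalizes it to $x=0$.
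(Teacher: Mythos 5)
Your part (b) is correct and is essentially the paper's own argument: the $2n$ folds cut $\Sigma_2$ into $2n$ arcs on which $X^-h_2$ has alternating sign (the alternation closing up because $2n$ is even), $X^+h_2$ has constant sign by transversality, and the regions $\Sigma^c,\Sigma^s,\Sigma^u$ are determined by these two signs; the bookkeeping for $\Sigma_1$ via Lemma~\ref{corresp-fold} is consistent with the paper. The genuine gap is in part (a), and it is exactly the step you flag yourself: from ``$0$ is a non-regular value of $g_2$'' you cannot conclude that a component of $T=\{X_2^-=0\}$ is a vertical fiber $\{x=x_0\}$. That implication is false. Take, on the lower half torus,
\[
X^-(x,y)=\Bigl(1,\ g(x)+y-\tfrac12\Bigr),\qquad g(x)=\operatorname{sin}(2\pi x)\bigl(1-\cos(2\pi x)\bigr).
\]
Then $X^-$ has no singularities; its contact function on $\Sigma_2$ is $g$, which has exactly one simple zero (at $x=\tfrac12$, a fold, so the number of folds is odd) and one cubic, sign-changing zero (at $x=0$, a degenerate tangency, not a fold). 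The tangency set is the graph $y=\tfrac12-g(x)$, a closed curve everywhere transverse to the vertical foliation, and on the seam $X^-(0,y)=(1,y-\tfrac12)$, so neither conclusion (i) nor (ii) holds. The parity obstruction on the single circle $\Sigma_2$ therefore can never yield the conclusion about the whole line $\{(0,y)\}$: it produces one degenerate point of $\Sigma_2$, not a degenerate fiber.

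The paper's proof obtains the line conclusion from an input that your reduction to $g_2$ discards. It does not argue on $\Sigma_2$ alone: reading the orientation of Figure~\ref{dobra-impar-toro}, it assumes that the sign dichotomy created by the odd number of folds holds along the whole seam, namely $\langle X^-(\eta,y),\nabla h_2(\eta,y)\rangle>0$ and $\langle X^-(1-\eta,y),\nabla h_2(1-\eta,y)\rangle<0$ for \emph{every} $y$ and all small $\eta>0$; continuity together with the identification $x=0\sim x=1$ then forces $X^-h_2(0,y)=0$ for all $y$, and the dichotomy (i) versus (ii) is just whether $X_1^-$ also vanishes there. This ``for all $y$'' hypothesis (automatic when, e.g., $X_2^-$ does not depend on $y$, as in all the paper's figures and examples) is precisely what is lost in passing to $g_2=X_2^-(\cdot,\tfrac12)$. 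So either you import that structural assumption explicitly, in which case your parity argument becomes a short corollary of it, or the statement read literally for an arbitrary $X^-$ with an odd number of folds on $\Sigma_2$ is contradicted by the example above. Your proposed repair via Lemma~\ref{corresp-fold} (continuation of tangencies across $\M^-$) does not close the gap: in the example the tangency curve continues smoothly across the half torus without ever aligning with a vertical fiber.
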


\begin{proof}
Item $(a)$: We detail the proof for the case when $\M=\T$. The analysis for $\M=\Ss$ is analogous and we omit. Consider the following orientation given in Figure \ref{dobra-impar-toro}, the analysis for the other cases is similar. The contact, at $(x,y)$, between the smooth vector field $X^-$ with $\Sigma_{2}$ is given by $\langle X^-(x,y), \nabla h_{2}(x,y)\rangle $.  

\begin{figure}[h]
\begin{center}
\begin{overpic}[width=5.5cm,bb=0 0 156 91]{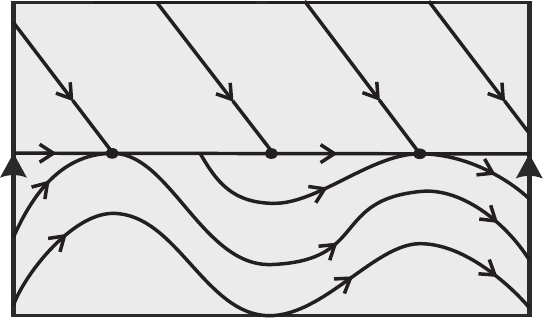}
\put(86,2){$\Sigma_1$} \put(86,32){$\Sigma_2$} \put(20,34){$p_1$}\put(77,34){$p_{n}$}\put(50,34){$p_{n-1}$}\put(35,34){$\dots$}
\end{overpic}
\end{center}
\caption{The dynamics for the fold-regular case when there exist an odd number of fold points of $X^-$ in $\Sigma_{2}$.}\label{dobra-impar-toro}
\end{figure}

Note that, with this orientation, we get $\langle X^-(\eta,y), \nabla h_{2}(\eta,y)\rangle >0$ and $\langle X^-(1-\eta,y), \nabla h_{2}(1-\eta,y)\rangle<0$, where $\eta$ is small and non-negative real number. Therefore, 
\[
\dis\lim_{\eta \to 0} \langle X^-(\eta,y), \nabla h_{2}(\eta,y)\rangle = \dis\lim_{\eta \to 0} \langle X^-(1-\eta,y), \nabla h_{2}(1-\eta,y)\rangle=0.
\] As $\nabla h_2(0,y)=(0,1)$ we conclude that $X^-(0,y)= 0$ or $\langle X^-(0,y), \nabla h_{2}(0,y)\rangle$ $=0$.

Item $(b)$: Consider, without loss of generality, that $X^+h_2(p)<0$ for all $p\in \Sigma_2$. Let be $p_i,i=1, \dots, 2n$ the fold singularities of $X^-$ in $\Sigma_2$. So $X^-h_2(p_i)=0,i=1, \dots,2n$ and the sign of $(X^-)^2h_2(p)$ changes when $p\in \Sigma_2$ pass to each fold singularity $p_i$. Therefore, we get that each time which $p$ pass through $p_i$ we obtain a change of regions on $\Sigma_2$, between $\Sigma^c$ and $\Sigma^s$. 


\end{proof}

\begin{remark}By Lemma \ref{corresp-fold} and Theorem \ref{prop-Dobra-Regular}, considering $\M=\T$, if the number of visible folds is even, then the number of invisible folds is also even. So, if $X^-$ has no critical points or it is not flat on $\Sigma_2$, the minimum number of folds is four: two visible and two invisible.
\end{remark}

The next result is an important consequence of Theorem \ref{prop-Dobra-Regular}.

\begin{corollary}\label{not-ss} If $X$ is a fold--regular PSVF on $\M$ with an odd number of fold singularities, then $X$ is not structural stable. Moreover, in this case the codimension of the bifurcation diagram is equal to infinity.
\end{corollary}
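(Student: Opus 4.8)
The plan is to deduce both assertions directly from Theorem \ref{prop-Dobra-Regular}$(a)$. Because $X$ is fold--regular and the number of fold singularities of $X^-$ on $\Sigma_2$ is odd, that theorem leaves only two possibilities along the fiber $\{(0,y)\}$: either $X^-(0,y)=0$ for every $y$ (a whole curve of equilibria of $X^-$), or $\langle X^-(0,y),\nabla h_2(0,y)\rangle=0$ for every $y$ (a whole curve of points where $X^-$ loses transversality to the fibers $\{y=\mathrm{const}\}$). In either case $X$ carries a one--dimensional continuum $\mathcal{D}\subset\{(0,y)\}$ of degenerate points. The whole argument rests on the facts that $\mathcal{D}$ is not robust under $C^r$--perturbation and that it is cut out by infinitely many independent conditions.

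For the structural instability I would show that destroying $\mathcal{D}$ changes a topological invariant. In the first alternative $\mathcal{D}$ is the equilibrium set of $X^-$, which any topological equivalence must preserve as a set; in the second, the odd number of fold points of $X^-$ on $\Sigma_2$ is itself invariant, since the fold points are exactly the boundary points between the crossing and sliding regions of $\Sigma$, and these regions are dynamically distinguished. I would then exhibit, in every $C^r$--neighborhood of $X$ inside $\Omega^r$, a field $\tilde X=(X^+,\tilde X^-)$ obtained from a small generic perturbation of $X^-$ supported near $\{(0,y)\}$ that removes the degeneracy; by Theorem \ref{prop-Dobra-Regular}$(b)$ (and Lemma \ref{corresp-fold} when $\M=\T$) the result has the even--fold, finitely stratified structure of the generic classes, so it has neither a curve of equilibria nor an odd fold count. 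A one--dimensional distinguished set cannot be carried homeomorphically onto a finite one, and an odd count cannot match an even one; hence $\tilde X$ is not topologically equivalent to $X$, and as such fields fill every neighborhood of $X$, the field $X$ is not structurally stable.

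For the second assertion the decisive point is that the degeneracy defining $\mathcal{D}$ is the identical vanishing of a scalar function along an entire curve, namely the infinite system $\langle X^-(0,y),\nabla h_2(0,y)\rangle=0$ for all $y$ (or $X^-(0,y)=0$ for all $y$). I would formalize this by passing to the contact datum $X\mapsto g$, with $g(y)=\langle X^-(0,y),\nabla h_2(0,y)\rangle$, valued in an infinite--dimensional space of functions of $y$; the bifurcation set is then the preimage of $\{g\equiv 0\}$, a subspace of infinite codimension. Concretely, given any finite--parameter family $X_\lambda$, $\lambda\in\R^k$, one may prescribe the signs of $g$ at $k+1$ distinct heights, which $k$ parameters cannot all realize, so no finite unfolding is versal. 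The main obstacle I anticipate is exactly this last step: ``codimension of the bifurcation diagram equal to infinity'' must be assigned a meaning, since codimension is normally attached to finite strata of a jet space. The cleanest route is the functional statement above — declaring the codimension to be that of $\{g\equiv 0\}$ in the space of contact data along $\{(0,y)\}$, equivalently that no finite--parameter family transverse to the bifurcation set is versal — with the genericity of the approximating perturbations supplied by standard transversality together with Lemma \ref{corresp-fold}.
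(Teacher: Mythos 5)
Your proposal is correct and takes essentially the same route as the paper: the paper's entire proof of Corollary \ref{not-ss} is the one-line observation that it ``follows as consequence of the presence of a curve of tangential or critical points of the smooth vector field $X^-$,'' i.e.\ exactly the degenerate set $\mathcal{D}\subset\{(0,y)\}$ supplied by Theorem \ref{prop-Dobra-Regular}$(a)$ on which your argument rests. Your elaboration --- perturbing away the curve and using the invariance of the equilibrium set / fold-count parity to rule out topological equivalence, and interpreting the identical vanishing of the contact function along $\{(0,y)\}$ as a condition of infinite codimension --- just fills in the details the paper leaves implicit.
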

\begin{proof}Follows as consequence of the presence of a curve of tangential or critical points of the smooth vector field $X^-$. 
\end{proof}

The kind of change of structure present in Corollary \ref{not-ss} does not happen in PSVF defined in euclidean spaces. In fact, in this context, the family of $X$ presenting a fold-regular singularities is generic (codimension zero), see \cite{G-S-T, Kuz}. So, considering PSVF defined in $\T$ or $\Ss$, when the number of fold singularities changes (even to odd), the behavior changes between a codimension zero to codimension infinity!


\subsection{Critical point of $X^{s}$}  

In this case, we suppose that the number of pseudo equilibrium of $X^{s}$ is finite and all of them distinct and hyperbolic, i.e., $X\in \Omega_0(h)$. Let $p_{i},i=1, \dots, n$ these pseudo equilibriums. 

Let us define the sign function by $\sgn(0)=0$, $\sgn(x)=1$ if $x>0$, $\sgn(x)=-1$ if $x<0$. We obtain the following results:

\begin{theorem}If $\M=\T$ or $\M=\Ss$, the number of pseudo equilibrium of $X^{s}$ is even and if $p_i$ is an attracting (resp. repelling) pseudo equilibrium of $X^s$ then $p_{i+1}$ is a repelling (resp. attracting) pseudo equilibrium of $X^s$.
\label{prop-Hiperbolico}\end{theorem}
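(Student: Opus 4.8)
The plan is to regard the normalized sliding vector field $X^s$ from \eqref{expressao-campo-deslizante} as a one--dimensional flow on a circle. Each connected component of $\Sigma$ — the single circle $\Sigma_2$ when $\M=\Ss$, and the two circles $\Sigma_1,\Sigma_2$ when $\M=\T$ — is topologically $S^1$ because of the identification $x=0\sim x=1$ coming from \eqref{relacao-equivalencia}. Fixing such a component and parametrizing it by $x\in\R/\Z$, the scalar function $g(x):=X^s(x)=(X_1^+-X_1^-)(x,\cdot)$ is smooth and $1$--periodic, and the sliding motion reduces to the one--dimensional equation $\dot x=g(x)$. By Definition \ref{definicao fold point}$(b)$, the hyperbolic pseudo equilibria $p_1,\dots,p_n$ are exactly the zeros of $g$ at which $g'\neq 0$, that is, the simple zeros of $g$.

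First I would prove that $n$ is even. Since $g$ is continuous and $1$--periodic, it has the same sign at the two endpoints of a fundamental domain. Because every $p_i$ is a simple zero, $g$ changes sign at each $p_i$ and keeps a constant nonzero sign on each open arc between consecutive zeros. Travelling once around $S^1$, the sign of $g$ must return to its starting value, which forces an even number of sign changes; hence $n=2m$. Equivalently, one counts $\sum_i \sgn\big(g'(p_i)\big)=0$, because the up--crossings and down--crossings of the value $0$ by a periodic function occur in equal number.

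Next I would establish the alternation of stability. Order the zeros cyclically as $p_1,\dots,p_{2m}$ along $S^1$. On the open arc joining $p_i$ to $p_{i+1}$ the function $g$ keeps a constant sign, so at two consecutive simple zeros $g$ passes from $+$ to $-$ and then from $-$ to $+$; consequently $g'(p_i)$ and $g'(p_{i+1})$ have opposite signs. For the flow $\dot x=g(x)$ a simple zero $p$ is attracting precisely when $g'(p)<0$ and repelling precisely when $g'(p)>0$. Combining this with the sign alternation shows that an attracting pseudo equilibrium is always followed by a repelling one, and conversely. The reasoning is identical on each circle when $\M=\T$ and on $\Sigma_2$ when $\M=\Ss$.

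The step demanding the most care is the identification of attracting/repelling with the sign of $g'$, because the time rescaling turning $\widehat{X}^s$ into the normalized field $X^s$ is orientation preserving on the stable sliding region $\Sigma^s$ (where $X_2^--X_2^+>0$) but orientation reversing on the unstable region $\Sigma^u$ (where $X_2^--X_2^+<0$). Thus whenever a pseudo equilibrium lies in $\Sigma^u$ the genuine Filippov sliding motion runs with time orientation opposite to $\dot x=g(x)$, and the rule $g'(p)<0\Leftrightarrow\text{attracting}$ must be read for the actual sliding dynamics rather than for the normalized field. Once this correspondence is fixed consistently on $\overline{\Sigma^s}$ and $\overline{\Sigma^u}$, neither the parity count nor the alternation pattern is affected, since both follow purely from the circle topology of $\Sigma$ and the simplicity of the zeros of $g$.
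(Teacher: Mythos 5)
Your proof is correct and is essentially the paper's own argument: both exploit that each component of $\Sigma$ is a circle and that hyperbolic pseudo equilibria are exactly the simple zeros of the scalar periodic function $X^s=X_1^+-X_1^-$, so the sign of $(X^s)'$ must alternate at consecutive zeros, forcing an even total count (you establish parity first by counting sign changes of a periodic function and then deduce alternation, while the paper asserts alternation first and obtains parity by contradiction). One caution on your final paragraph: since the theorem speaks of attracting/repelling pseudo equilibria \emph{of $X^s$}, stability here is meant with respect to the normalized field, and if one instead insisted on reading it through the genuine Filippov sliding dynamics $\widehat{X}^s$, the alternation could actually fail when consecutive equilibria lie in regions of different type ($\Sigma^s$ versus $\Sigma^u$) --- precisely because of the orientation reversal you point out --- so the consistent convention to adopt is the one via $(X^s)'$, which is also the one the paper uses.
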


\begin{proof}
As $p_{i}$ is hyperbolic, we define $I(p_{i})=sgn((X^{s}(p_{i})'))$ as the index of the hyperbolic pseudo equilibrium $p_i$ of $X^s$. We suppose, without loss of generality, that $I(p_{i})=1$, i.e., $p_{i}$ is a hyperbolic repeller pseudo equilibrium of $X^{s}$. In this way, as $I(p_{i-1}) \neq 0$ and $I(p_{i+1})\neq 0$ the choose of sign of $I(p_{i})$ (the stability of $p_{i}$) also determinates that the signs of $I(p_{i-1})$ and $I(p_{i+1})$ are both negative, i.e., $p_{i-1}$ and $p_{i+1}$ are hyperbolic attractors (pseudo equilibria of $X^{s}$). 

Therefore, the stability of all the others pseudo equilibrium are determinate. In fact, by the same argument we get that $p_{i-2}$ and $p_{i+2}$ are hyperbolic repeller pseudo equilibrium, so $p_{i-3}$ and $p_{i+3}$ are hyperbolic attractor pseudo equilibrium and we repeat the same analysis for the other pseudo equilibrium.

Now we prove that the number of pseudo equilibria of $X^s$ is even. 

In fact, if the number of pseudo equilibria is odd, at the end of this process remain two consecutive pseudo equilibria. Considering the previous analysis of stability of each pseudo equilibrium,  we get that these two consecutive points have the same stability, what is a contradiction with the fact that these one are hyperbolic. So, we conclude that at the end of the analysis remain only one pseudo equilibrium of $X^{s}$. Therefore, we get that the number of pseudo equilibria of $X^{s}$ is even and the stability of each one is alternating between attractor and repeller. 
\end{proof}

As consequence of Theorem \ref{prop-Hiperbolico} we get that

\begin{corollary}
If the number of pseudo equilibria of $X^{s}$ is odd then there exists at least one $p_{i}^{*}$ that is a saddle-node pseudo equilibrium of $X^{s}$, i.e., in one side of $p_{i}^{*}$, in $\Sigma^{s}$, the orbits of $X^{s}$ arrive at $p_{i}^{*}$ and in another side they leave. 
\label{coro-sela-no}\end{corollary}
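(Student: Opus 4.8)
The plan is to read the statement as the contrapositive of Theorem \ref{prop-Hiperbolico} reinforced by a parity count of the zeros of $X^s$ along $\Sigma$. Recall that, by \eqref{expressao-campo-deslizante}, the normalized sliding vector field $X^s$ is a scalar field on the sliding region, and after the identifications producing $\M$ each of $\Sigma_1,\Sigma_2$ is a circle. Thus $X^s$ may be regarded as a smooth function on a circle whose zeros are exactly the pseudo equilibria $p_1,\dots,p_n$, assumed finite and isolated. If the total number is odd, at least one of these circles carries an odd number of pseudo equilibria, and I would fix attention on that circle.

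First I would classify each zero by its sign-change behaviour rather than by its derivative. Call $p_i$ a \emph{sign-changing} zero if $X^s$ takes opposite signs on the two arcs of $\Sigma$ adjacent to $p_i$, and \emph{non-sign-changing} otherwise. Every hyperbolic pseudo equilibrium is automatically sign-changing, since $(X^s)'(p_i)\neq 0$ forces $X^s$ to cross zero transversally; conversely a non-sign-changing zero must satisfy $(X^s)'(p_i)=0$ and is therefore non-hyperbolic.

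The key step is the topological parity argument. Since the $p_i$ are isolated, the sign of $X^s$ is constant on each open arc of $\Sigma\setminus\{p_1,\dots,p_n\}$, and traversing the circle once the sign of $X^s$ must return to its initial value; hence the number of sign-changing zeros is necessarily even. This is precisely the mechanism behind the alternation of stability established in Theorem \ref{prop-Hiperbolico}: consecutive transversal zeros alternate between attracting and repelling, and closing up the loop forces an even count. Consequently, if the number $n$ of pseudo equilibria on this circle is odd, the number of non-sign-changing zeros equals $n$ minus an even number, hence is odd and in particular at least one.

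Finally I would identify such a non-sign-changing zero $p_i^*$ with a saddle-node. Because $X^s$ keeps a constant sign on both sides of $p_i^*$, the one-dimensional flow of $X^s$ moves in the same direction on the two adjacent arcs, so on one side the trajectories approach $p_i^*$ while on the other they move away from it --- exactly the saddle-node (semistable) configuration in the statement. The main obstacle, and the reason the result does not follow from Theorem \ref{prop-Hiperbolico} verbatim, is that non-hyperbolicity alone does not guarantee this semistable behaviour (a zero of odd order $\ge 3$ would still change sign); the remedy is to count sign changes rather than vanishing derivatives, which makes the parity obstruction produce a genuinely non-sign-changing, hence saddle-node, pseudo equilibrium. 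One should also verify the mild standing hypotheses that the zero set is finite and that $X^s$ does not vanish identically on any arc, so that the notion of sign on the complementary arcs is well defined.
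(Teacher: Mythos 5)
Your proof is correct, and it takes a route that is subtly but genuinely different from the paper's. The paper offers no separate proof of this corollary: it presents it as an immediate consequence of Theorem \ref{prop-Hiperbolico}, i.e., as the contrapositive ``if the number of pseudo equilibria is odd, they cannot all be hyperbolic,'' with the identification of the degenerate point as a saddle-node left implicit. As you yourself point out, that implicit step has a gap: a non-hyperbolic zero of $X^s$ of odd order (say, locally like $x^3$) still changes sign and is attracting or repelling, not semistable, so non-hyperbolicity alone does not deliver the saddle-node configuration claimed in the statement. Your argument repairs exactly this: by classifying the zeros of $X^s$ by sign change rather than by $(X^s)'$, and using the fact that a continuous function on a circle with finitely many zeros must have an even number of sign-changing zeros, an odd total count forces at least one non-sign-changing zero, and such a zero is precisely semistable --- orbits arrive on one side and leave on the other, which is the saddle-node behaviour in the corollary. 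The paper's route is shorter given Theorem \ref{prop-Hiperbolico} (whose proof encodes the same circle-parity mechanism through the alternation of attractors and repellers), but it only proves ``at least one non-hyperbolic pseudo equilibrium''; your sign-change count is self-contained, needs no hyperbolicity assumption on the remaining equilibria, and is the argument one actually needs to justify the corollary as stated. The only standing hypotheses to keep explicit are the ones you already flag (finitely many, isolated zeros, $X^s$ not identically zero on an arc), plus, on the torus, the observation that an odd total forces an odd count on at least one of the two circles $\Sigma_1$, $\Sigma_2$, which you also make.
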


\begin{remark}
As consequence, in this case, the codimension of the PSVF that presents an odd number of pseudo equilibria of $X^{s}$ is, at least, one.
\end{remark}

\begin{example}
Consider the $1$-parameter family of vector fields define on torus $\T$:
\[X_\alpha(x,y)=\left\{
\begin{array}{lcl}
X^+(x,y)=(-1,-\frac92 +24x-24x^2)&,&(x,y)\in [0,1]\times \left(\frac12,1\right),\\
X_{\alpha}^-(x,y)=(-1,\alpha)&,&(x,y)\in [0,1]\times \left(0,\frac12\right).
\end{array}
\right.
\]
Note that the vector field $X^+$ has four fold points: visible folds at $(1,\frac14)$, $(\frac12,\frac34)$ and invisible folds at $(\frac12,\frac14)$,$(1,\frac34)$ (recall that $(1,\frac14)\sim(0,\frac14)$ and $(1,\frac34)\sim(0,\frac34)$).

For every $\alpha\neq 0$, $\Sigma_1$ and $\Sigma_2$ breaks into two regions each: \\
\noindent $(a)$ if $\alpha>0$, in $\Sigma_2$ the segment $\{(t,\frac12),\, t\in [0,\frac14)\cup(\frac34,1]\}$ is a crossing region and the segment $\{(t,\frac12),\, t\in (\frac14,\frac34)\}$ is an unstable sliding region, whereas in $\Sigma_1$ the segment $\{(t,1),\, t\in [0,\frac14)\cup(\frac34,1]\}$ is a crossing region and the segment $\{(t,1),\, t\in (\frac14,\frac34)\}$ is a stable sliding region.\\
\noindent $(b)$ if $\alpha<0$, in $\Sigma_2$ the segment $\{(t,\frac12),\, t\in [0,\frac14)\cup(\frac34,1]\}$ is a stable sliding region and the segment $\{(t,\frac12),\, t\in (\frac14,\frac34)\}$ is a crossing region, whereas in $\Sigma_1$ the segment $\{(t,1),\, t\in [0,\frac14)\cup(\frac34,1]\}$ is an unstable sliding region and the segment $\{(t,1),\, t\in (\frac14,\frac34)\}$ is a crossing region.
\end{example}


\section{Chaos and minimality of PSVF on torus and sphere}\label{Secao-Caos}

In this section, our objective is study the minimal sets of the PSVF presenting a finite number of fold singularities on the torus and sphere. We analyse the global dynamics and prove, under certain conditions, the existence of minimal sets in $\M=\T$ or $\M=\Ss$ and the occurrence of chaos.

\subsection{Some concepts and definitions under the chaos and minimal sets of PSVF}

In the following, we consider some definitions, which is stated in \cite{BCE-ETDS}, of minimal sets and chaos in this context.

\begin{definition}
A set $A\subset \M$ is invariant for $X$ if for each $p\in A$ and all global trajectory $\phi_{X}(t,p)$ passing through $p$ it holds $\phi_{X}(t,p)\in A$.
\end{definition}

\begin{definition}\label{def-minimal}
Let be $X\in \Omega^r$. A set $S \subset \M$ is minimal for $X$ if:
\begin{itemize}
\item[$(a)$] $S\neq \emptyset$;
\item[$(b)$] $S$ is compact;
\item[$(c)$] $S$ is invariant for $X$;
\item[$(d)$] $S$ does not contain proper subsets satisfying $(a)-(c)$.
\end{itemize}
\end{definition}

One of the first papers that treat chaos in the context of PSVF was \cite{J-C2} where the authors adapt the classical definition of chaos. In fact, this definition, stated in \cite{BCE-ETDS} for example,  consider topological transitivity and sensibility to initial conditions, which are defined in the following.

\begin{definition}
$X\in \Omega^r$ is topologically transitive on an invariant subset $A\subset \M$ if for every pair of non-empty open sets $U,V\subset \M$ there exist $p\in U$ and $t_0>0$ such that the positive global trajectory $\phi_X^+(t,p)$ evaluated at $t=t_0$ is in $V$, i. e., $\phi_X^+(t_0,p)\in V$. 
\end{definition}

\begin{definition}
$X\in \Omega^r$ presents sensitive dependence on a compact invariant set $A$ if there exist a fixed $r>0$ with $r<diam(A)$ such that for each $x\in A$ and $\varepsilon>0$ there exist $y\in B_{\varepsilon}(x)\cap A$ and positive global trajectories $\Gamma_x^+$ and $\Gamma_y^+$ passing through $x,y$, respectively, satisfying 
\[d_H(\Gamma_x^+, \Gamma_y^+)=\dis\sup_{a\in \Gamma_x^+, b\in \Gamma_y^+}d(a,b) >r,\]
where $diam(A)$ is a diameter of $A$ and $d(a,b)$ is a euclidean distance between $a$ and $b$.
\end{definition}

With these definitions we are able to define a chaotic set, analogously to the smooth case.

\begin{definition}
We say that $X$ is chaotic on a compact invariant set $A$ if is topologically transitive and present sensitive dependence on $A$.
\end{definition}

To prove the invariance of a compact subset of $\M=\T,\Ss$ we need to consider the dynamics of the half-first return map, that is defined in following. The purpose to consider the half-first return is that the complexity of the dynamics of $X$ is provided by the existence of a finite number of fold singularities, that occur in the bottom half torus.

\begin{definition}
Consider a transverse section $\Lambda=\{\overline{(0,\xi)};\xi\in [0,1/2]\}, q\in \Lambda$ and the trajectory $\phi_{X^-}(t,q)$ of $X^-$ passing through $q$. Let be $t(q)$ the first positive time such that $\phi_{X^-}(t(q),q)\in \Lambda$. We define the half-first return map $\pi: \Lambda\rightarrow \Lambda$ such that $\pi(q)=\phi_{X^-}(t(q),q)$ and the displacement map $d:\Lambda \rightarrow \Lambda$ such that $d(q)=\pi(q)-q$.
\end{definition}
Note that the half-first return map is a $C^r$-difeomorfism. In the one dimensional transverse section $\Lambda$ we consider the order relation: given $q_i=(0,\xi_i)\in \Lambda$ we say that $q_i< q_j$ if and only if $\xi_i<\xi_j$.

Let be $q_{i}=\Gamma_{p_{i}}\cap \Lambda$ the point where $\Gamma_{p_i}$ intercepts $\Lambda$ for the first time, $\tilde{q_i}=\pi(q_i)$ and $\tilde{\Gamma}_{p_i}$ the arc of trajectory of $X^-$ passing through $p_i$ whose frontier is $q_i$ and $\tilde{q_i}$.


\vs

\begin{definition}\label{pstar} Consider $p^*\in \Sigma_2$ a fold singularity of $X^-$ satisfying:

\begin{itemize}
\item[$(1)$]the trajectories of the sliding vector fields defined in the adjacent stable sliding region of $p^*$ (remember that $p^*$ is in frontier of stable sliding sliding region) converges to $p^*$;
\item[$(2)$]considering the region $R_1\subset \Sigma^-$ limited by $\Sigma_2$ and $\tilde{\Gamma}_{p^*}$ then all arcs of trajectories $\tilde{\Gamma}_{p_i}\subset R_1$.
\end{itemize}
\end{definition}

To study the chaos and minimality on the torus and sphere we consider the following model $X=(X^+,X^-)$ of PSVF where one of them is regular (constant) and the other present a finite number of fold singularities
\begin{equation}\label{expressao-campo-regular-dobra}
\begin{array}{ll}
X^+   &=(\alpha, \beta)\\\\
X^-   & =(1, X_2^-),
\end{array}
\end{equation}
where $X_2^-$ is a smooth function such that the equation $X_2^-(x,1/2)=0$ has exactly $n$ simple solutions (these solution are the fold singularities of $X^-$ in $\Sigma_2$).


The next lemma provide us the dynamics of the sliding vector fields.

\begin{lemma}\label{lema-direcao-campo-deslizante}
Considering the model \eqref{expressao-campo-regular-dobra} the sliding vector fields defined on all segments of stable sliding region in $\Sigma_2$ is constant or identically zero.
\end{lemma}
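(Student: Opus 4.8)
The plan is to read off the normalized sliding vector field directly from its defining formula \eqref{expressao-campo-deslizante}, exploiting the fact that in the model \eqref{expressao-campo-regular-dobra} the horizontal components of $X^+$ and $X^-$ are constants.

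First I would recall that, by \eqref{expressao-campo-deslizante}, on any point $q$ of a stable sliding segment of $\Sigma_2$ the normalized sliding vector field is the scalar
\[
X^s(q) = (X_1^+ - X_1^-)(q).
\]
Since the sliding field is tangent to $\Sigma_2$, which is horizontal, only the first component is relevant, and the time rescaling of the Filippov field $\widehat{X}^s$ removes the denominator $(X_2^- - X_2^+)(q)$.

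Next I would substitute the model data. Because $X^+ = (\alpha,\beta)$ and $X^- = (1, X_2^-)$, the horizontal components are $X_1^+ \equiv \alpha$ and $X_1^- \equiv 1$, both independent of $q$. Hence for every $q$ in the sliding region,
\[
X^s(q) = \alpha - 1,
\]
a constant that does not depend on $q$, and in particular takes the same value on each of the several sliding segments into which the fold singularities cut $\Sigma_2$ (by Theorem \ref{prop-Dobra-Regular}$(b)$). Concluding by cases on this constant: if $\alpha \neq 1$ the field is the nonzero constant $\alpha - 1$, while if $\alpha = 1$ it is identically zero, which is exactly the asserted dichotomy.

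I do not expect a genuine obstacle, as the statement is a direct consequence of the explicit formula; the only point deserving care is making explicit that it is the \emph{normalized} field that is constant, and not the Filippov field $\widehat{X}^s$, whose denominator $X_2^-(q) - \beta$ does vary with $q$. The two are topologically equivalent by the time rescaling used to define \eqref{expressao-campo-deslizante}, so constancy of $X^s$ is the relevant qualitative conclusion for the subsequent analysis of the half-first return map.
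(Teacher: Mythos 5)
Your proof is correct and follows essentially the same route as the paper: substitute the constant horizontal components of the model \eqref{expressao-campo-regular-dobra} into \eqref{expressao-campo-deslizante} to get $X^s(q)=\alpha-1$ on every sliding segment, then split into the cases $\alpha\neq 1$ (nonzero constant) and $\alpha=1$ (identically zero). Your extra remark distinguishing the normalized field from the Filippov field $\widehat{X}^s$ is a fine point of care but does not change the argument.
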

\begin{proof}
By \eqref{expressao-campo-deslizante}, the expression of the normalized sliding vector fields in this case is 
\[
X^s(p)=(X_1^+-X_1^-,0)(p)=(\alpha-1,0).\]
Therefore if $\alpha>1, \alpha=1, \alpha<1$, respectively, then $X^s$ points to the right, is identically zero, points to the left, respectively.
\end{proof}

\begin{remark}
As our intention is study the most generic behavior in $\Omega^r$, we suppose that $\alpha \neq 1$. 
\end{remark}

\subsection{Analysis of global dynamics on $\T$}\label{Subsecao-Caos-Toro}

Now we present the main result about the global dynamics of $X\in \Omega^r$ defined in \eqref{expressao-campo-regular-dobra}.

\begin{theorem}\label{teorema-dinamica-global-toro}Consider $\M=\T$ and $X\in \Omega^r$ given by \eqref{expressao-campo-regular-dobra}. We have:

\begin{itemize}
\item[$(i)$] if $d(q)> 0(d(q)<0)$ for all $q\in \Lambda$ then all positive (negative) trajectories of $\T$ passes through $p^*$;

\item[$(ii)$] If $d(p)=0$ and $d'(p)=\pi'(p)-1\neq 0$ then $\Gamma_p$ is a hyperbolic limit cycle of $X$. Besides than, if $d'(p)<0$ $(d'(p)>0)$ then $\Gamma_p$ is an attractor (repeller) limit cycle and all trajectories of $X$, except $\Gamma_p$, passes thought $p^*$ for negative (positive) times. 
\item[$(iii)$] If $d(q_i)=d(q_j)=0,i\neq j,$ and do not exist others points between $q_i,q_j$ in $\Lambda$ such that $d(q)=0$ then $S_{ij}$, the region limited by the periodic orbits $\Gamma_{q_i}$ and $\Gamma_{q_j}$, is a minimal set.
\item[$(iv)$] If $d(q)=0$ for $q\in I_C=[q_{1}, q_{2}]\subset \Lambda$ then all trajectories of $X$ in the region between the periodic orbits $\Gamma_{q_1}$ and $\Gamma_{q_2}$ are periodic. In this case, we have a behavior like a center in the torus.
\end{itemize}
\end{theorem}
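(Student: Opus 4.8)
The plan is to reduce the global dynamics of $X$ on $\T$ to the one–dimensional dynamics of the half–first return map $\pi$ on the section $\Lambda$, and then to read off each of the four cases from the zero set of the displacement map $d=\pi-\mathrm{id}$. The dictionary I would set up first is the following. Since $\pi$ is an orientation–preserving $C^r$–diffeomorphism of the interval $\Lambda$, one has $\pi'>0$, hence $d'=\pi'-1>-1$ everywhere. A point $q$ with $d(q)=0$ is a fixed point of $\pi$, which means the arc of $X^-$ through $q$ closes up after one turn of $\T$ in the $x$–direction, producing the closed trajectory $\Gamma_q$; conversely every closed trajectory of this type meets $\Lambda$ at a zero of $d$. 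Finally, the sign of $d$ on a subinterval controls whether $\pi$ pushes points monotonically toward the endpoint of $\Lambda$ lying on $\Sigma_2$ (where $p^*$ sits) or toward the endpoint on $\Sigma_1$. Every statement of the theorem is then a consequence of this monotone interval dynamics combined with the absorbing behaviour of the stable sliding region recorded in Lemma \ref{lema-direcao-campo-deslizante} and Definition \ref{pstar}.

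For $(i)$, if $d>0$ on all of $\Lambda$ then $\{\pi^n(q)\}$ is strictly increasing and bounded, so it converges; an interior limit would be a fixed point of $\pi$, i.e.\ a zero of $d$, which is excluded, so the limit is the top endpoint of $\Lambda$ on $\Sigma_2$. By condition $(2)$ of Definition \ref{pstar} the arc $\tilde\Gamma_{p^*}$ is the outermost one, so a forward orbit climbing toward $\Sigma_2$ necessarily enters the stable sliding region adjacent to $p^*$, and there Lemma \ref{lema-direcao-campo-deslizante} together with condition $(1)$ of Definition \ref{pstar} forces it to slide into $p^*$. Thus every positive trajectory passes through $p^*$; reversing time gives the case $d<0$. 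For $(ii)$, the hypotheses say $p$ is a hyperbolic fixed point of $\pi$ with multiplier $\pi'(p)=1+d'(p)$; as $\pi'>0$ this multiplier lies in $(0,1)$ when $d'(p)<0$ and exceeds $1$ when $d'(p)>0$, so $\Gamma_p$ is an attracting, resp.\ repelling, hyperbolic limit cycle. Taking $p$ to be the unique zero of $d$, its sign on the two complementary subintervals is fixed by $d'(p)$, and in the time direction in which $\Gamma_p$ repels the orbits climb to $\Sigma_2$ and are routed through $p^*$ exactly as in $(i)$, which gives the stated forward/backward alternative.

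The degenerate zero sets give $(iv)$ and $(iii)$. If $d\equiv0$ on $I_C$, every point of $I_C$ is fixed by $\pi$, so each $\Gamma_q$ with $q\in I_C$ is closed and the region between $\Gamma_{q_1}$ and $\Gamma_{q_2}$ is entirely foliated by periodic orbits, which is the center behaviour of $(iv)$. For $(iii)$, with $q_i,q_j$ consecutive simple zeros, $d$ keeps a constant sign on $(q_i,q_j)$; the boundary curves $\Gamma_{q_i},\Gamma_{q_j}$ are periodic and no trajectory crosses them, so the closed region $S_{ij}$ they bound is compact and invariant. For minimality I would locate $p^*$ and its adjacent stable sliding segment and show they lie inside $S_{ij}$; the stable sliding on $\Sigma_2$ then collapses the whole transverse family of orbits of $S_{ij}$ onto a single sliding arc, and combining this orbit–merging with the equidistribution of the horizontal transport (the mechanism already used in Theorem \ref{caso-regular}) one argues that $S_{ij}$ carries no proper nonempty compact invariant subset.

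I expect this last step to be the main obstacle. In the smooth category a region bounded by two periodic orbits is never minimal, because each boundary cycle is itself a proper compact invariant subset; what can rescue $(iii)$ is the non–invertibility created by the stable sliding region, which fuses otherwise distinct orbits and destroys the invariant sub–annuli that a monotone return map would produce. Turning this into a proof—quantifying the contraction and merging due to sliding, controlling its interplay with the rotational transport, and verifying that together they force every orbit of $S_{ij}$ to be dense—is the delicate part, and it is also where the forward/backward asymmetry intrinsic to Filippov sliding must be handled with care.
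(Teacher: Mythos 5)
Your treatment of items $(i)$, $(ii)$ and $(iv)$ follows essentially the same route as the paper: reduce everything to the interval dynamics of $\pi$ and $d$ on $\Lambda$, push the iterates monotonically until the trajectory falls into the stable sliding segment adjacent to $p^*$ (conditions $(1)$--$(2)$ of Definition \ref{pstar} together with Lemma \ref{lema-direcao-campo-deslizante}), and read hyperbolicity and stability of $\Gamma_p$ off the multiplier $\pi'(p)=1+d'(p)$. Those parts are correct, at the same level of informality as the paper itself (the paper, unlike you, does spend a few lines on the iterates that leave $\Lambda$ by crossing $\Sigma_1$ and traverse $\M^+$ before rejoining the analysis on $\Sigma_2$, but the mechanism is the one you describe).

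The genuine gap is item $(iii)$, which you explicitly leave open, and the program you sketch for it is not the one that closes it. Density of orbits in $S_{ij}$ is the wrong target: an interior orbit of $S_{ij}$ is governed by the smooth field $X^-$ alone, so its closure is a one-dimensional set (the orbit together with its $\alpha$- and $\omega$-limit cycles) and can never fill a two-dimensional region; no quantitative control of sliding contraction can change this, since generically $S_{ij}$ meets $\Sigma_2$ only at tangency points and contains no sliding segment at all. What rescues $(iii)$ in the paper is instead the notion of invariance required in Definition \ref{def-minimal}$(c)$: a set is invariant for $X$ only if \emph{every} global trajectory of the PSVF through \emph{every} one of its points remains in it. The bounding cycles $\Gamma_{q_i}=\Gamma_{p_i}$ pass through tangency points $p_i\in\Sigma_2$, and through such a point there are additional global trajectories of $X$ (concatenations with the flow of $X^+$ arriving from $\M^+$, and with the sliding flow that is ejected at the fold), all of which immediately leave the cycle; hence a single bounding cycle --- and likewise the closure of any interior spiralling orbit, which accumulates on such a cycle --- is \emph{not} an invariant set of $X$, so it does not contradict minimality of $S_{ij}$. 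With this observation the paper's argument is short and qualitative: $S_{ij}$ is nonempty, compact and invariant (uniqueness of solutions of $X^-$ on $\Sigma^-$), and by the classical Poincar\'e--Bendixson theorem any nonempty compact invariant proper subset would have to contain a singularity of $X^-$ (impossible, since $X_1^-\equiv 1$ in \eqref{expressao-campo-regular-dobra}) or a closed orbit of $X^-$, i.e.\ a zero of $d$, which by the hypothesis of consecutive zeros forces it to contain one of the two bounding cycles --- and these are not invariant. So your instinct that smooth-category minimality fails here was exactly right, but the repair is definitional (the stronger PSVF notion of invariance), not dynamical (orbit merging and equidistribution); note also that the paper itself is terse on precisely this point, since the same strong notion of invariance must be applied consistently when checking that $S_{ij}$ itself qualifies.
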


\begin{proof}
Proof of Item $(i)$: Assume that $d(p)=\pi(q)-q>0$ for all $p\in \Lambda$. In this case, we most prove that all positive trajectories pass through $p^*\in \Sigma_2$. To prove this result, we consider the dynamics of $X^-, X^s$ and the distance map $d$.

Consider $q\in (q^*, (0,1/2))\subset \Lambda$. There are two possibilities for the trajectory $\Gamma_q$: or $\Gamma_q$ intercepts the adjacent sliding region to $p^*$ and then follows to $p^*$ by the sliding vector field or $\Gamma_q$ intercepts other stable sliding region, namely $(p_i,p_{i+1})$. In the second case, the trajectory will be ejected to $\Sigma^-$ and as all arcs of trajectories of $p_i$ are in $R_1$ this trajectory will return again to the stable sliding region until reaches the adjacent stable sliding region and finally converges to $p^*$.

Besides than, for all $p\in \Sigma_2$ then $\Gamma_p$ passes trough $p^*$.

In fact, if $p\in ((0,1/2), p^*)$ then $\Gamma_p$ intercepts $\Lambda$ in a point $q\in (q^*, (0,1/2))$ and so, by previous statements, the trajectory by $p$ passes through $p^*$. If $p\in (p^*, (1,1/2))$ then $\Gamma_p$ intercepts $\Lambda$ in a point $q$. As by hypothesis $d(q)>0$ then exists a positive integer $n_0$ such that $d^{n_0}(q)\in (q^*, (0,1/2))\subset \Lambda$ and then the result follows as the previous case. 

For $q\in ((0,0), q^*)$ we have two possibilities for the dynamics of $X$: the trajectories intercepts $\Sigma_1$ in a crossing region and then follows the flow of $X^+$ reaching $\Sigma_2$ or $\Gamma_q$ intercepts $\Lambda$. In the last case, as $d(q)>0$, there exists an integer positive $n_0$ such that $d^{n_0}(q)\in (q^*, (0,1/2))\subset \Lambda$. So, all positive trajectories of $X$ pass through $p^*$.
 
The proof of Items $(ii)$ and $(iv)$ follows by the previous statements and considering the dynamics of distance map $d:\Lambda \rightarrow \Lambda$, i. e., if $d(q)=0$ this represent a periodic orbit for $X^-$ and the hypothesis $d'(q)\neq 0$ is equivalent to the periodic orbit $\Gamma_q$ to be hyperbolic.

\vs

To proof of Item $(iii)$ we have to show that the region between $\Gamma_{q_i}$ and $\Gamma_{q_j}$ is invariant by the flows of $X$, compact, non empty and does not contain a subset proper with these properties.

The invariance of $S_{ij}$ follows by the existence and uniqueness of solutions of smooth vector fields (remind that we are considering the dynamics of $X$ only in $\Sigma^-$, that is governed by $X^-\in \mathfrak{X}^r$). As $q_i\neq q_j$ then $S_{ij}$ is non empty and is compact because is a closed set in $\T$. By hypothesis, do not exist others points between $q_i,q_j$ in $\Lambda$ such that $d(q)=0$, in other words, does not exist periodic orbits in $S_{ij}$. Note that by \eqref{expressao-campo-regular-dobra},  the smooth vector field $X^-$ does not have singular points. By the classical Poincar\'e-Bendixson theorem we get that does not exist other subset proper of $S_{ij}$ that is invariant, non empty and compact. Therefore, we conclude that $S_{ij}$ is a minimal set of $X$.
\end{proof}

The next result provide us all non-trivial invariant sets for the case where $X$, given in \eqref{expressao-campo-regular-dobra}, presents a finite number of fold singularities.

\begin{corollary}
Consider the same hypothesis of Theorem \ref{teorema-dinamica-global-toro}. Let be $q_1, \dots, q_n$ the fixed points of half-first return $\pi$. If all periodic orbits $\Gamma_{q_i}, i=1, \dots, n$ of $X$ are hyperbolic then they appear alternating between attractor and repeller. Besides than, each region limited by $\Gamma_{q_i}$ and $\Gamma_{q_{i+1}}$ for $i=2, \dots, n-2$ are minimal sets of $X$, with $\omega(p)=\Gamma_{q_i}$ and $\alpha(p)=\Gamma_{q_{i+1}}$ or $\omega(p)=\Gamma_{q_{i+1}}$ and $\alpha(p)=\Gamma_{q_{i}}$.
\end{corollary}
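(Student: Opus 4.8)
The plan is to read the entire statement off the one–dimensional dynamics of the displacement map $d(q)=\pi(q)-q$ on $\Lambda$, using Theorem \ref{teorema-dinamica-global-toro} as a black box. First I would fix the order $q_1<q_2<\cdots<q_n$ of the fixed points of $\pi$ induced by the order relation on $\Lambda$. Since each $\Gamma_{q_i}$ is hyperbolic, item $(ii)$ of Theorem \ref{teorema-dinamica-global-toro} gives $d'(q_i)=\pi'(q_i)-1\neq 0$ and classifies $\Gamma_{q_i}$ as an attractor when $d'(q_i)<0$ and as a repeller when $d'(q_i)>0$. The $q_i$ are then exactly the zeros of the $C^r$ function $d$, and each is a simple zero; hence $d$ changes sign at every $q_i$ and keeps a constant sign on each open arc $(q_i,q_{i+1})$, where there are no further zeros. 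A short intermediate–value argument forces the sign of $d'$ to alternate: if $d'(q_i)<0$ then $d<0$ on $(q_i,q_{i+1})$, so the next sign change at $q_{i+1}$ is from $-$ to $+$, i.e. $d'(q_{i+1})>0$, and conversely. This is precisely the claimed alternation between attractor and repeller.

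For minimality I would invoke item $(iii)$ of Theorem \ref{teorema-dinamica-global-toro} directly: since $q_i$ and $q_{i+1}$ are consecutive zeros of $d$, the annular region $S_{i,i+1}$ bounded by $\Gamma_{q_i}$ and $\Gamma_{q_{i+1}}$ contains no other periodic orbit and, by the Poincar\'e--Bendixson theorem for PSVF \cite{Buzzi-Carvalho-Eusebio}, no proper nonempty compact invariant subset; thus $S_{i,i+1}$ is minimal. The restriction to interior indices $i=2,\dots,n-2$ is essential and I would flag it: the two extreme regions, adjacent to $\Gamma_{q_1}$ and to $\Gamma_{q_n}$, are not bounded by two periodic orbits but instead feed into the fold and sliding region of $p^*$, where item $(i)$ shows every trajectory is funneled to $p^*$; such regions contain $p^*$ in their $\omega$-limit and fail to be minimal.

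Finally I would identify the limit sets. On a fixed arc $(q_i,q_{i+1})$ the sign of $d$ is constant, so the iterates $\pi^k(q)$ of any interior point move monotonically, converging to one endpoint as $k\to+\infty$ and to the other as $k\to-\infty$; translating this back through the flow yields that the full trajectory of a point $p\in S_{i,i+1}$ accumulates exactly on the two boundary periodic orbits. When $d>0$ on the arc, $\pi$ pushes points upward toward $q_{i+1}$, which by the alternation is the attractor, giving $\omega(p)=\Gamma_{q_{i+1}}$ and $\alpha(p)=\Gamma_{q_i}$; when $d<0$ the roles are reversed, giving the second alternative in the statement. The main obstacle I anticipate is not the one–dimensional bookkeeping but the passage from the monotone convergence of the section map $\pi$ to genuine convergence of the full concatenated flow $\phi_X$, through $X^+$, $X^-$ and the sliding vector field, onto $\Gamma_{q_i}$ and $\Gamma_{q_{i+1}}$; the companion point, already subsumed in item $(iii)$, is that the boundary orbits, meeting $\Sigma_2$ tangentially, are not themselves invariant under the branching Filippov convention, which is exactly what allows the closed annulus to be minimal despite carrying two periodic orbits on its boundary.
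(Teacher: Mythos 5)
Your route is the paper's own route: the paper's entire proof of this corollary is the one-line remark that it is ``analogous to the proof of Theorem \ref{teorema-dinamica-global-toro}, Items $(ii)$ and $(iii)$'', and your alternation argument (hyperbolicity makes each $q_i$ a simple zero of the displacement map $d$, so $d$ changes sign at every fixed point and $d'$ alternates) together with the monotone-iteration identification of $\omega(p)$ and $\alpha(p)$ is exactly the bookkeeping that citation leaves implicit. Those parts are correct.

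The genuine gap is the claim you rely on at the end, that the boundary orbits ``meet $\Sigma_2$ tangentially'' and hence are not invariant under the Filippov convention --- which is indeed the only mechanism that could make a closed annulus carrying two periodic orbits in its boundary minimal in the sense of Definition \ref{def-minimal}. For the model \eqref{expressao-campo-regular-dobra} this claim is false for all but (at most) one of the $\Gamma_{q_i}$. Since $X^-=(1,X_2^-)$ has first component $1$, every orbit of $X^-$ in $\Sigma^-$ is a graph $y=g(x)$ over the $x$-circle, and a fixed point of $\pi$ gives a closed graph. Distinct closed orbits are disjoint graphs, hence ordered; if $g_i<g_j$ everywhere and both are $\leq 1/2$, then $g_i$ cannot attain the value $1/2$. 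So only the topmost $\Gamma_{q_i}$ can touch $\Sigma_2$; every other $\Gamma_{q_i}$ lies strictly inside $\Sigma^-$, where $X$ coincides with the smooth field $X^-$, and is therefore itself a nonempty, compact, invariant (indeed minimal) set for $X$. Consequently each closed region $S_{i,i+1}$ contains a proper, nonempty, compact invariant subset --- its own boundary circles --- and cannot be minimal. This defect is inherited from Item $(iii)$ of the theorem itself, whose hypotheses, by the same graph argument, can essentially never be satisfied by two distinct periodic orbits tangent to $\Sigma_2$; so citing that item cannot repair the corollary, but your tangency assertion is the step that is concretely false, and with it the minimality half of your proof collapses. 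A smaller point: your explanation of the index range $i=2,\dots,n-2$ also does not hold, since the excluded regions $S_{1,2}$ and $S_{n-1,n}$ are bounded by two periodic orbits exactly like the interior ones, so the restriction is not explained by proximity to the fold and sliding region of $p^*$.
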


\begin{proof}
The proof is analogously to the proof of Theorem \ref{teorema-dinamica-global-toro}, Items $(ii)$ and $(iii)$.
\end{proof}

In the following we prove that $X$, considering the hypothesis that $d(q)\neq 0$ for all $q\in \Lambda$ then $X$ is chaotic. The approach of this proof is given in \cite{BCE-ETDS}.

\begin{theorem}\label{caos-toro22}
Consider $X$ given in \eqref{expressao-campo-regular-dobra}, with $\alpha\neq 1$ and $\beta<0$. If $d(q)\neq 0$ then $X$ is chaotic on $\T$.
\end{theorem}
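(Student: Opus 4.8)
The plan is to verify the two defining properties of chaos — topological transitivity and sensitive dependence — directly from the funnel structure already isolated in Theorem \ref{teorema-dinamica-global-toro}, following the strategy of \cite{BCE-ETDS}. First I would normalize the situation: since $\beta<0$ and $d(q)\neq 0$ for every $q\in\Lambda$, the displacement map has constant sign, and by reversing time if necessary I may assume $d(q)>0$ for all $q$ (the case $d(q)<0$ being treated analogously through the time-reversing symmetry of the model, which preserves both properties). Under this normalization Theorem \ref{teorema-dinamica-global-toro}$(i)$ gives that every positive global trajectory of $X$ passes through the fold point $p^*$. I would record the two features of $p^*$ that drive the argument: by Lemma \ref{lema-direcao-campo-deslizante} the sliding field on the stable sliding segment adjacent to $p^*$ is a nonzero constant pointing toward $p^*$, so this whole open segment collapses onto $p^*$ in finite time (a strong merging/contraction), while $p^*$ being a \emph{visible} fold of $X^-$ means the arc $\tilde\Gamma_{p^*}$ ejects the motion off $\Sigma_2$ back into $\Sigma^-$ (the escape mechanism).

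For topological transitivity, given nonempty open sets $U,V\subset\T$, I would pick $p\in U$ and follow its positive trajectory until it reaches $p^*$ and is ejected along $\tilde\Gamma_{p^*}$. From there the subsequent motion is encoded by the half-first return map $\pi$ on $\Lambda$ interlaced with the crossing passages through $\Sigma_1$ governed by the constant field $X^+$. The key point is that $d(q)\neq 0$ means $\pi$ has no fixed point, hence $X^-$ has no periodic orbit in the strip; combined with the absence of singularities of $X^-$ in \eqref{expressao-campo-regular-dobra}, the version of the Poincar\'e--Bendixson theorem for PSVF in \cite{Buzzi-Carvalho-Eusebio} (used exactly as in the proof of Theorem \ref{caso-regular}) forces the forward orbit to admit no limit cycle and hence to be dense in $\T$. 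In particular it enters $V$, which is transitivity.

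For sensitive dependence I would fix $r=\tfrac14<\mathrm{diam}(\T)$ and, given $x\in\T$ and $\varepsilon>0$, exploit the splitting of fates at the fold. Since the trajectory of $x$ passes through $p^*$, it meets $\Sigma_2$ arbitrarily close to the fold, where every neighborhood contains both points of the stable sliding region — which funnel to $p^*$ and then leave along the fixed arc $\tilde\Gamma_{p^*}$ — and points of the adjacent crossing region — which pass straight into $\Sigma^-$ and evolve along a completely different arc of $X^-$. These two continuations separate by a definite distance exceeding $r$; transporting this dichotomy back along the flow box of $X$ to the prescribed $\varepsilon$-ball around $x$ produces the required $y\in B_\varepsilon(x)$ with $d_H(\Gamma_x^+,\Gamma_y^+)>r$.

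The hard part will be the transitivity step, because the global dynamics is not a circle rotation: the collapse of the sliding segment onto $p^*$ makes the induced return map on $\Sigma_2$ (equivalently on $\Lambda$) genuinely non-injective, so the classical Poincar\'e--Bendixson and circle-map dichotomies do not apply verbatim. I would handle this by modeling that return map as a piecewise monotone map with a single collapsing interval and showing, via a semiconjugacy to a rigid rotation in which the sliding segment is identified to the point $p^*$, that $d\neq 0$ yields dense orbits — the analogue, in our global setting, of the mechanism of \cite{BCE-ETDS}. Verifying that the ejection along $\tilde\Gamma_{p^*}$, together with condition $(2)$ of Definition \ref{pstar}, keeps every ejected arc inside $R_1$ (so that the itinerary is well defined and the semiconjugacy is onto) is the main technical point to be checked.
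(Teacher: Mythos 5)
Your skeleton (normalize to $d>0$, invoke Theorem \ref{teorema-dinamica-global-toro}$(i)$, then check transitivity and sensitivity) starts out parallel to the paper, but the engine you propose for transitivity is the step that fails, and it fails for a structural reason. You argue: $d(q)\neq 0$ $\Rightarrow$ $\pi$ has no fixed points $\Rightarrow$ no periodic orbits $\Rightarrow$ (Poincar\'e--Bendixson for PSVF) forward orbits are dense. The first implication only excludes closed orbits of the crossing/sewing type, i.e. closed orbits of $X^-$ meeting $\Lambda$; it does not exclude \emph{sliding cycles}, and when $d(q)>0$ everywhere the system necessarily has one: the forward orbit of $p^*$ is ejected along $\tilde{\Gamma}_{p^*}$, remains in $R_1$ by condition $(2)$ of Definition \ref{pstar}, lands again on the attracting sliding part of $\Sigma_2$ (arrivals at $\Sigma_2$ from below occur exactly where $X_2^->0$) and slides back to $p^*$. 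This closed curve $C$ is precisely where Theorem \ref{teorema-dinamica-global-toro}$(i)$ sends everything: each positive trajectory reaches $p^*$ in finite time and afterwards runs along $C$, since sliding on attracting segments and the ejection at the visible fold are forced continuations. Hence no individual forward orbit is dense (its $\omega$-limit is $C$); the Poincar\'e--Bendixson alternative is realized by this pseudo-cycle, not by density. Note the contrast with Theorem \ref{caso-regular}, where the same invocation is made in the purely crossing case ($\Sigma=\Sigma^c$, no sliding, hence no merging of orbits); here the attracting sliding is the whole point. Your proposed repair (a semiconjugacy of the collapsed return map to a rigid rotation) fails for the same reason: the collapsed intervals are mapped to points whose forward orbits re-enter collapsed intervals, so the induced degree-one monotone circle map has rational rotation number and eventually periodic, not dense, orbits.

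The paper's proof rests on an entirely different mechanism, for which density of any single orbit is irrelevant: \emph{reachability by concatenation at the funnel point}, exploiting that the PSVF flow is multivalued. Given $z\in U$ and $w\in V$, both positive trajectories pass through $p^*$, say $\phi_X(t_1,z)=p^*$ and $\phi_X(t_2,w)=p^*$, and one may take $t_1\geq t_2$ because $z$'s orbit returns to $p^*$ repeatedly along $C$. The paper then glues the trajectory pieces at $p^*$ and writes $\phi_X(t_1-t_2,z)=\phi_X(-t_2,\phi_X(t_1,z))=\phi_X(-t_2,p^*)=w$, concluding that \emph{some} positive global trajectory through $z$ hits $w$ at time $t_1-t_2$. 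Transitivity is immediate from this, and sensitive dependence is read off from the same reachability statement (fix $a,b$ with $d(a,b)>r=\mathrm{diam}(\T)/2$; the trajectories $\Gamma_x^+$ and $\Gamma_y^+$ can be chosen to reach $a$ and $b$, so $d_H(\Gamma_x^+,\Gamma_y^+)>r$), not from your local splitting of fates at the fold. In short, the theorem is a statement about the non-deterministic character of the flow at $p^*$ (infinitely many trajectories merge there, which the paper converts into connecting trajectories), whereas your attempt treats the flow as deterministic and searches for a dense orbit, which does not exist here. (One can legitimately worry that the paper's gluing silently turns the backward branch of $w$'s trajectory into a forward continuation from $p^*$, so that step deserves scrutiny on its own; but whatever one decides about it, that concatenation idea is the one a proof along the paper's lines must reproduce, and it is absent from your proposal.)
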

\begin{proof}
First of all, we prove that any $z,w\in \T$ then there exist a positive global trajectory passing through $z$ and $t_0>0$ such that $\phi_X(t_0,z)=w$.

In fact, suppose that $d(q)>0$ for all $q\in \Lambda$. The proof for the case $d(q)<0$ is analogous. By Theorem \ref{teorema-dinamica-global-toro} we get all $z\in \T$ then the positive trajectory $\Gamma_z^+$ passes through $p^*$. So, given $z,w\in \T$ then there exist $t_1,t_2>0$ such that $\phi_X(t_1,z)=p^*$ and $\phi_X(t_2,w)=p^*$. Suppose, without loss of generality,  that $t_1-t_2\geq 0$. Therefore, 
\[
\phi_X(t_1-t_2,z)=\phi_X(-t_2,\phi_X(t_1,z)=\phi_X(-t_2,p^*)=w.
\]

We prove now that $X$ is topologically transitive. 

\noindent In fact, given any non-empty sets $U,V\subset \T$ there exist $z\in U$ and $w\in V$. By the previous statements, we proved that there exist a positive trajectory connecting $z$ and $w$. So, we get the topological transitivity. 

So, remain to prove that sensitive dependence on $\T$. 

\noindent Consider $D=diam(\T), r=D/2>0$ and $a,b\in \T$ such that $d(a,b)>r$. Let be $\varepsilon>0$ and $x\in \T$. We get $y\in N_{\varepsilon}(x)\subset \T$, where $N_{\varepsilon}(x)$ is a $\varepsilon$-neighborhood of $x$ in the topology of $\T$. As we proved previously, we get that exist $t_1,t_2>0$ such that $\phi_X(t_1,x)=a$ and $\phi_X(t_2,y)=b$. Therefore, $d_H(\phi_X(t_1,x),\phi_X(t_2,y)=d(a,b)>r$ and we conclude that $X$ is chaotic in $\T$.
\end{proof}

\subsection{Analysis of global dynamics on $\Ss$}\label{Subsecao-Caos-Esfera}

 For the next lemmas, consider $\partial Q=I_1\cup I_2\cup I_3\cup I_4$, where $I_1= [0,1]\times \{0\}$, $I_2=\{1\}\times [0,1]$, $I_3=[0,1]\times \{1\}$ and $I_4=\{0\}\times [0,1]$. \\

 \begin{figure}[h]
 	\begin{center}
 		
 		\begin{overpic}[tics=10,width=8cm,bb= 0 0 370 135]{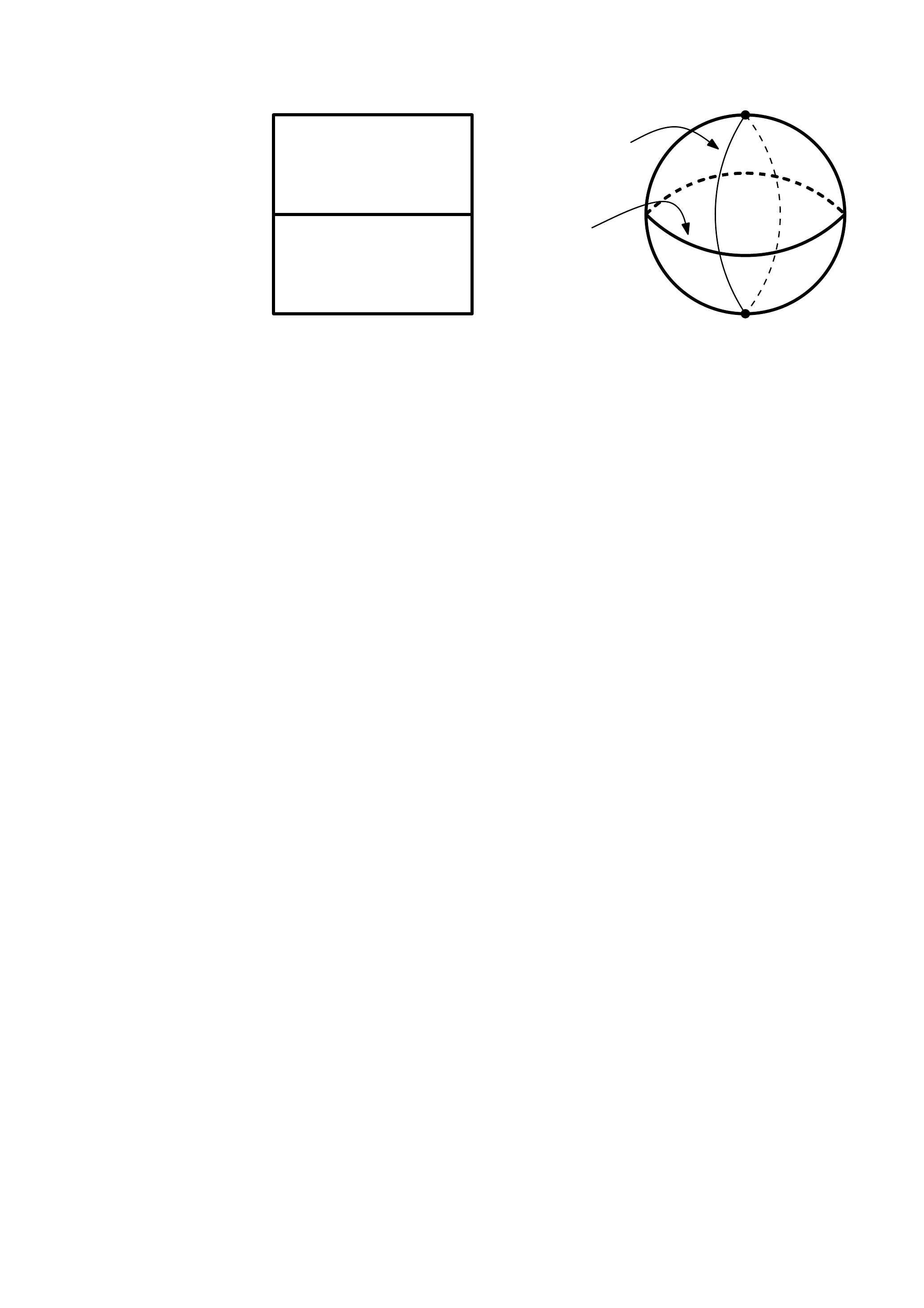}
 			\put(15,-5){$I_1$}\put(38,15){$I_2$}
 			\put(15,40){$I_3$}\put(-5,15){$I_4$}
 			\put(15,20){$\Sigma$}
 			
 			\put(52,12){$\Sigma$}
 			\put(50,27){$I_2=I_4$}
 			\put(80,-5){$I_1$}
 			\put(80,38){$I_3$}
 		\end{overpic}
 	\end{center}
 	\caption{The construction of the sphere, with the discontinuity manifold $\Sigma$.}\label{fig:figs2}
 \end{figure}
 
 \begin{lemma}\label{lema-orbitas-homoclinicas}There is a family of trajectories that are homoclinic to $p_S$.
 \end{lemma}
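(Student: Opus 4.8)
The plan is to produce the homoclinic family entirely inside the southern hemisphere, where only $X^-$ governs the dynamics. First I would realize $\M^-$ as a closed topological disk $D$ whose boundary is the equator $\Sigma$ and whose center is the collapsed edge $\{y=0\}$ identified with the south pole $p_S$ (see Figure \ref{fig:figs2}); by Remark \ref{observacao-const-esfera}, $p_S$ is a singularity of every field on $\Ss$. The structural observation that drives everything is that in the model \eqref{expressao-campo-regular-dobra} one has $X^-=(1,X_2^-)$ with $\dot x\equiv 1$, so $X^-$ has \emph{no} equilibria in the interior of $D$. Consequently the only singularity of the flow in $\overline{D}$, apart from the fold (tangency) points forced onto $\partial D=\Sigma$, is the apex $p_S$ itself; in particular every interior trajectory is a graph over $x$ and cannot be stationary.

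Next I would single out a visible fold $p^*\in\Sigma$ as in Definition \ref{pstar} together with the turning set $\{(x,y)\in D:\ X_2^-(x,y)=0\}$, along which $\dot y=0$ and the trajectories attain their extreme height. By the visibility condition the trajectory $\tilde\Gamma_{p^*}$ through $p^*$ is internally tangent to $\Sigma$ and curves back into $D$, and by Definition \ref{pstar}(2) it bounds, together with $\Sigma$, a region $R_1\subset\M^-$ that traps the fold arcs $\tilde\Gamma_{p_i}$. I would then follow an orbit $\gamma$ of $X^-$ that issues near $p_S$ and lies inside $R_1$: since $\dot x\equiv 1$ it must rise until it crosses the turning set, where $\dot y$ reverses sign, and then descend; because it is confined to $R_1$ it never meets a crossing arc of $\Sigma$, hence it cannot escape into $\M^+$.

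The $\omega$- and $\alpha$-limit sets of $\gamma$ therefore lie in the compact set $\overline{R_1}\subset D$, whose only equilibrium is $p_S$. Excluding a closed orbit encircling $p_S$ inside $R_1$ (ruled out here by the trapping property built into Definition \ref{pstar}, which is exactly the generic hypothesis), the classical Poincar\'e--Bendixson theorem applied on the disk $D$ leaves only the possibility $\omega(\gamma)=\alpha(\gamma)=\{p_S\}$. Translated to the sphere, $\gamma$ leaves $p_S$, turns below the equator, and returns to $p_S$, i.e. $\gamma$ is homoclinic to $p_S$. Finally, varying the height at which the orbit departs from $p_S$ produces a one-parameter family: the orbits filling $R_1$ between $p_S$ and $\tilde\Gamma_{p^*}$ depend continuously on the initial condition and each inherits the same trapping, so $R_1$ is foliated by orbits homoclinic to $p_S$, which is the asserted family (the set $\M_h$ of Theorems B and C).

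The main obstacle is the middle step: one must rule out that a rising trajectory reaches a crossing arc of $\Sigma$ and leaves $\M^-$ before turning around, and one must correctly read ``reaching the collapsed edge $\{y=0\}$'' as ``limiting to the singularity $p_S$'' rather than passing through it in finite time. Both points are controlled precisely by the visibility of $p^*$ and by the trapping of $R_1$ encoded in Definition \ref{pstar}, which is why the conclusion is stated generically.
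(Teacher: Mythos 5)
Your conclusion is true, but the mechanism you invoke to force orbits back to $p_S$ does not work, and it is not the paper's mechanism. The paper's proof is purely local: the folds of $X^-$ on $\Sigma_2$ force, by the same continuation argument as in Lemma \ref{corresp-fold}, tangency points of $X^-$ with the collapsed edge $I_1=[0,1]\times\{0\}$; near the invisible one, $\tilde p$, every nearby orbit arc of $X^-$ meets $I_1$ at exactly two points, and since $I_1$ is identified with the single point $p_S$ in $\Ss=Q/\sim$, each such arc closes up into a loop through $p_S$ --- this already is the one-parameter family of homoclinic trajectories (and it is exactly how $\M_h$ is built in the proof of Theorem \ref{teorema-dinamica-global-esfera}). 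You never use the tangency structure of $X^-$ on $I_1$ at all, and that is precisely the ingredient that produces a whole family of small arcs leaving and returning to the collapsed edge.

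There are two concrete gaps in your argument. First, the confinement step misreads Definition \ref{pstar}: the region $R_1$ is bounded by $\Sigma_2$ and the arc $\tilde{\Gamma}_{p^*}$, so it hugs the equator; points near $p_S$ are in general not in $R_1$, and membership in $R_1$ cannot prevent an orbit from meeting $\Sigma$, since part of the boundary of $R_1$ lies in $\Sigma_2$. In the paper, Definition \ref{pstar} is used for the \emph{opposite} purpose (Theorem \ref{teorema-dinamica-global-toro}, item $(i)$): to funnel trajectories into the stable sliding region and then to $p^*$, not to keep them away from $\Sigma$. Second, your Poincar\'e--Bendixson step discards closed orbits ``by the trapping property'', but closed orbits of $X^-$ in the lower hemisphere are not excluded by any hypothesis of this lemma: they are exactly the fixed points of the half-return map $\pi$ and occur in items $(iii)$--$(v)$ of Theorem \ref{teorema-dinamica-global-esfera} (the sets $\M_s$). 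Consequently an orbit issuing near $p_S$ may spiral toward such a periodic orbit, or may rise to $\Sigma_2$ and escape through a crossing arc into $\M^+$ (this is what happens to points of $\M_c$); neither possibility is ruled out by what you wrote. The homoclinic family exists not because \emph{all} orbits near $p_S$ return to $p_S$, but because the orbits on the correct side of an invisible tangency of $X^-$ with $I_1$ do so, remaining in a small neighborhood of that tangency, which is the local argument your proof is missing.
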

 \begin{proof}Consider the trajectories of $X$ viewed on square $Q$. Then the existence of the visible and invisible tangency points $p$ and $q$, respectively, implies that there are visible and invisible tangencies of $X$ on the segment $I_1$, denote by $\tilde q$ and $\tilde p$, respectively. The trajectories of $X$ in a small neighborhood of $\tilde p$ intersects $I_1$ on two points. On $\Ss=Q/\sim$ all these trajectories are closed and have $p_S$ as a common points.
 \end{proof}
 
 
%
 
As the analysis developed in previous subsection, the distinct dynamics of $X$ on $\Ss$ depends on the sign of displacement map.  In the following, we consider these different possibilities.


 Recall that $p_i\in \Sigma_2 \subset Q/\sim$ and $\eta_i\in I_1 \subset Q$ are the fold singularities of $X^-$,  $q_{i}=\Gamma_{p_{i}}\cap \Lambda$ the points where $\Gamma_{p_i}$ intercepts $\Lambda$ for the first time and $\tilde{q_i}=\pi(q_i)$ for $i=1, \dots, 2n$. We shall use the same definition of $p^*$ as defined in Definition \ref{pstar} for $\T$.


\begin{theorem}\label{teorema-dinamica-global-esfera}Consider $\M=\Ss$ and $X\in \Omega^r$ given by \eqref{expressao-campo-regular-dobra}. We have:

\begin{itemize}
\item[$(i)$] $\Ss$ can be decomposed as $\Ss=\M_s\cup\M_c\cup\M_h$, where $\M_h$ has positive measure and is foliated by trajectories homoclinic to $p_S$, $\M_c$ contains the upper hemisphere and has the property that the trajectory of every $x\in \M_c$ pass through $p^*$ and $\M_s$ is invariant and contained in the lower hemisphere.

\item[$(ii)$] if $d(q)> 0(d(q)<0)$ for all $q\in \Lambda$ then all positive (negative) trajectories of $\Ss$ of points not in $\M_h$ passes through $p^*$;

\item[$(iii)$] If $d(p)=0$ and $d'(p)=\pi'(p)-1\neq 0$ then $\Gamma_p$ is a hyperbolic limit cycle of $X$. Besides than, if $d'(p)<0$ $(d'(p)>0)$ then $\Gamma_p$ is an attractor (repeller) limit cycle and all trajectories of $X$, except for points in $\M_s\cup \M_h\cup\{p_S,p_N\}$, passes through $p^*$ for negative (positive) times. 

\item[$(iv)$] If $d(q_i)=d(q_j)=0,i\neq j,$ and do not exist others points between $q_i,q_j$ in $\Lambda$ such that $d(q)=0$ then $S_{ij}$, the region limited by the periodic orbits $\Gamma_{q_i}$ and $\Gamma_{q_j}$, is a minimal set. In this case, $\M_s$ has positive measure.

\item[$(v)$] If $d(q)=0$ for $q\in I_C=[q_{1}, q_{2}]\subset \Lambda$ then all trajectories of $X$ in the region between the periodic orbits $\Gamma_{q_1}$ and $\Gamma_{q_2}$ is periodic. In this case, $\M_s$ has positive measure and behaviors like a center.

\item [$(vi)$]In particular if $q_i,q_j\in I_1$ then there is a continuum family of trajectories connecting $p_s$ and $p^*$ (and this is a stable property).
\end{itemize}
\end{theorem}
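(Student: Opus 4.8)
The plan is to transfer the analysis to the phase portrait of $X$ drawn on the square $Q$ and to read off the global dynamics after passing to the quotient $\Ss=Q/\sim$. The key structural fact is that the whole lower hemisphere $\M^-$ is governed by the single smooth field $X^-=(1,X_2^-)$, so the half-first return map $\pi$ and the displacement map $d$ on $\Lambda$ behave precisely as in the torus case; in particular the monotonicity and fixed-point arguments used to prove Theorem \ref{teorema-dinamica-global-toro} apply verbatim to $X^-$. The genuinely new ingredients are the pole singularities $p_N,p_S$ coming from the collapse of $I_3,I_1$ and the family of loops homoclinic to $p_S$ furnished by Lemma \ref{lema-orbitas-homoclinicas}. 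I would first fix the generic orientation in which $X^+$ is transversal to $\Sigma$ pointing into $\M^-$, so that the upper hemisphere becomes a funnel: every forward $X^+$-trajectory descends from $p_N$, meets $\Sigma$, and enters $\M^-$.

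For item $(i)$ I would take $\M_h$ to be the union of the homoclinic loops of Lemma \ref{lema-orbitas-homoclinicas}. Since those loops foliate a full two-dimensional region adjacent to $I_1$, the set $\M_h$ is open, hence of positive measure, and it is invariant because each loop is a single global trajectory. Next I would define $\M_c$ as the set of points whose positive trajectory reaches the stable sliding segment adjacent to $p^*$ and then slides into $p^*$; the upper hemisphere lies in $\M_c$ because the funnel deposits every point of $\M^+$ into $\M^-$, where the displacement analysis drives it toward $p^*$. Here Definition \ref{pstar}$(2)$ is essential: since all arcs $\tilde\Gamma_{p_i}$ lie in the region $R_1$ bounded by $\Sigma_2$ and $\tilde\Gamma_{p^*}$, no such orbit can fall into $\M_h$. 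Finally $\M_s$ is the invariant remainder contained in $\M^-$, whose invariance follows from uniqueness of solutions of the smooth field $X^-$ together with the sliding description of Lemma \ref{lema-direcao-campo-deslizante}.

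Items $(ii)$--$(v)$ I would obtain by importing the corresponding items of Theorem \ref{teorema-dinamica-global-toro} and correcting for the two features absent on the torus. For $(ii)$, a fixed sign of $d$ forces the iterates $\pi^{n}(q)$ to march monotonically into the arc adjacent to $p^*$, so every orbit not trapped in a homoclinic loop passes through $p^*$; the set one must remove is exactly $\M_h$. For $(iii)$, a simple zero of $d$ with $d'(p)\neq0$ is a hyperbolic limit cycle just as on the torus, and the basin statement is unchanged once $\M_s\cup\M_h\cup\{p_N,p_S\}$ is excluded. For $(iv)$--$(v)$, the minimality of $S_{ij}$ and the center behavior follow from the classical Poincar\'e--Bendixson theorem applied to $X^-$ inside $\M^-$ (valid since $X^-$ has no singularities there), exactly as in Theorem \ref{teorema-dinamica-global-toro}$(iii)$--$(iv)$; the only addition is that the trapped band of periodic orbits now lies in the lower hemisphere, so $\M_s$ acquires positive measure.

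For item $(vi)$, the hypothesis that $q_i,q_j$ lie on $I_1$ means that the two bounding fold trajectories pass through the collapsed segment $I_1$, i.e. they emanate from $p_S$ instead of returning to the interior of $\Lambda$. I would then argue by continuous dependence of the flow of $X^-$: the trajectories issuing from $p_S$ between these two arcs form a one-parameter continuum, and by the funneling established in $(i)$--$(ii)$ each is carried to the stable sliding segment adjacent to $p^*$ and slides into $p^*$, yielding a continuum of orbits connecting $p_S$ to $p^*$. The stability claim holds because the ingredients---the folds of $X^-$, the transversality of $X^+$ to $\Sigma$, and the attracting sliding segment terminating at $p^*$---all persist under small $C^r$-perturbations, so the tube of connecting trajectories survives. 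The main obstacle throughout is the control of the dynamics near $p_S$: one must verify that $\M_h$, $\M_c$ and $\M_s$ genuinely partition $\Ss$ up to measure zero and that no orbit leaks between them, which rests delicately on the position of the arcs $\tilde\Gamma_{p_i}$ relative to the region $R_1$ fixed in Definition \ref{pstar}.
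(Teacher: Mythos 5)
Your proposal is correct and follows essentially the same route as the paper: items $(ii)$--$(v)$ are reduced to the torus analysis of Theorems \ref{teorema-dinamica-global-toro} and \ref{caos-toro22}, the set $\M_h$ is built from the homoclinic loops of Lemma \ref{lema-orbitas-homoclinicas} near the invisible tangencies on $I_1$ (the paper just makes this slightly more explicit via a local first return map on $I_1$ whose maximal foliated regions have boundaries on the trajectories of the neighboring tangencies), and $(vi)$ is obtained by the same continuum-of-trajectories argument from $I_1$ to $p^*$.
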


 
 \begin{figure}[ht]
\begin{overpic}[tics=10,width=3.3cm,bb=0 0 138 132]{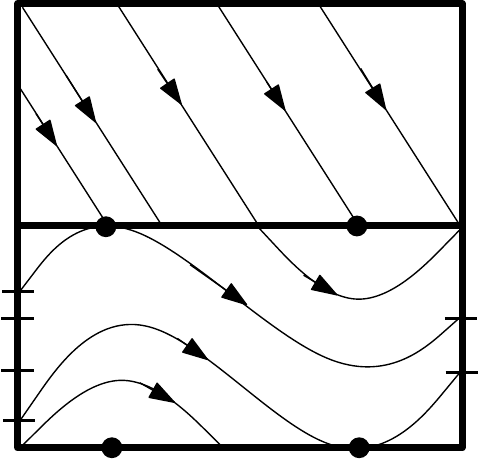}
\put(20,42){$p_i$}\put(72,42){$p_j$} \put(20,-5){$\eta_i$}\put(72,-5){$\eta_j$}
\put(-9,8){$q_j$} \put(-9,17){$\tilde q_j$}\put(103,17){$\tilde q_j$}
\put(-9,26){$\tilde q_i$}\put(103,28){$\tilde q_i$} \put(-9,35){$q_i$}
\end{overpic}\hspace{1cm}\begin{overpic}[tics=10,width=3.3cm,bb=0 0 138 132]{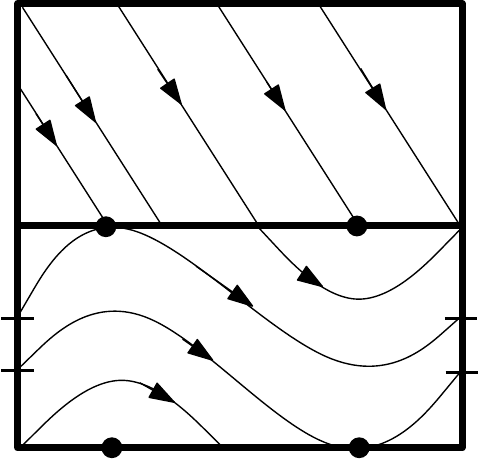}
\put(20,42){$p_i$}\put(72,42){$p_j$}\put(20,-5){$\eta_i$}\put(72,-5){$\eta_j$}
\put(-9,17){$q_j$}\put(103,17){$\widetilde{q_j}$} \put(-9,28){$q_i$} \put(103,28){$\widetilde{q}_i$}
\end{overpic}
\hspace{1cm}
		\begin{overpic}[tics=10,width=3.3cm,bb=0 0 133 133]{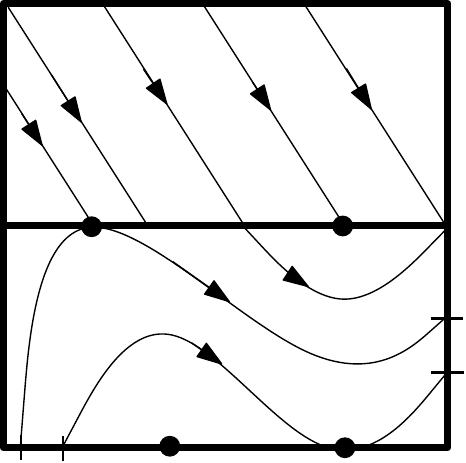}
			\put(20,42){$p_i$}\put(72,42){$p_j$}
			\put(35,-5){$\eta_i$}\put(72,-5){$\eta_j$}
			\put(13,-5){$q_j$}
			\put(103,17){$\tilde q_j$}
			\put(103,28){$\tilde q_i$}
			\put(1,-5){$q_i$}
		\end{overpic}
\caption{Cases (ii), (v) and (vi) of Theorem \ref{teorema-dinamica-global-esfera}, respectively.}\label{fig:figa5}
\end{figure}

\begin{proof}[Proof of Theorem \ref{teorema-dinamica-global-esfera}]
The proof of this Theorem is very similar to the proof of Theorem \ref{caos-toro22}. We just have to prove $(i)$. Figure \ref{fig:figa5} illustrates cases $(i), (v)$ and $(vi)$.

Let us prove (ii). Let $\eta_i$ be an invisible tangency in $I_1$ 
(as in all subfigures of Figure \ref{fig:figa5}). For $\delta>0$, consider $J_\delta=B(\eta_i,\delta)\cap I_1$ an open interval contained in $I_1$ and containing $\eta_i$. As the tangency points are isolated, we can define a first return map $P_i:\tilde J_\delta\subset J_\delta\cap \{\eta\in J_\delta,\, \eta<\eta_i\}\rightarrow J_\delta$ for some subset $\tilde J_\delta\subset J_\delta$ with the property that the flow of $X^-$ apply $\tilde J_\delta$ over $J_\delta\cap \{\eta\in J_\delta,\, \eta>\eta_i\}$. Then $\M_h^\delta=\{\phi_{X^-}(t,\eta);\, \eta\in \tilde J_\delta, t\in [0,t(\eta)]\}$, where $t(\eta)$ is the first return time of $\eta$, is foliated by trajectories homoclinic to $p_S$. Take $\delta_i$ the largest positive number with this property and denote $\M_h^{\eta_i}=\M_h^{\delta_i}$. Note that the boundary of $\M_h^{\eta_i}$  is an arc of the trajectory of $\eta_{i+1}$. Now take $\M_h=\bigcup_{\eta_l\in IT}\M_h^{\eta_l}$, where $IT$ is the set of invisible tangencies on $I_1$. Note that $\M\setminus int(\M_h)$ can be decomposed as $\M_c\cup\M_s$, in the same sense of the torus.

Note that $(vi)$ follows from the fact that if $q_i\in I_1$ then for every $\eta\in I_1$ and $\eta<q_i$, then the trajectory of $\eta$ ends at $p^*$, and as the tangencies are isolated, there are a continuum of these points.

\end{proof}

Theorem \ref{teorema-dinamica-global-esfera} (i) and (iii) implies the following result:

\begin{corollary}\label{cor-dinamica-global-esfera}Consider $\M=\Ss$ and $X\in \Omega^r$ given by \eqref{expressao-campo-regular-dobra}. Then there is a set $\M_c\subset\Ss$ of positive measure and containing $\Sigma$ such that if $x\in \M_c$, the trajectory of $x$ pass through $p^*$.
\end{corollary}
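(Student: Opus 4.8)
The goal is to prove Corollary \ref{cor-dinamica-global-esfera}, which asserts the existence of a positive-measure set $\M_c\subset\Ss$ containing $\Sigma$ such that every point of $\M_c$ has a trajectory passing through $p^*$. The plan is to extract this statement as a direct logical consequence of the decomposition established in Theorem \ref{teorema-dinamica-global-esfera}(i), together with the trajectory behavior recorded in item (iii). First I would recall that Theorem \ref{teorema-dinamica-global-esfera}(i) already produces the decomposition $\Ss=\M_s\cup\M_c\cup\M_h$ and asserts precisely that $\M_c$ contains the upper hemisphere and has the property that the trajectory of every $x\in\M_c$ passes through $p^*$. Thus the property ``the trajectory of $x$ passes through $p^*$'' is built into the definition of $\M_c$, and the only genuine content left to verify is (a) that $\M_c$ has positive measure and (b) that $\M_c$ contains the switching manifold $\Sigma$.

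For the positive-measure claim, I would argue that since $\M_c$ contains the entire upper (open) hemisphere $\M^+$ by construction, and $\M^+$ is a nonempty open subset of $\Ss$, it carries positive Lebesgue (area) measure; hence $\M_c\supseteq\M^+$ forces $\mathrm{meas}(\M_c)>0$ immediately. No delicate estimate is needed here because the containment of an open set already settles positivity of measure.

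For the claim that $\Sigma\subset\M_c$, I would examine the dynamics on the switching manifold under the model \eqref{expressao-campo-regular-dobra}. Using the analysis underlying Theorem \ref{teorema-dinamica-global-esfera}(ii) (and the half-first return argument of Theorem \ref{teorema-dinamica-global-toro}(i) transported to the sphere via the construction in the proof of (i)), every point $p\in\Sigma_2$ either lies in a crossing region, from which the trajectory enters $\M^+$ or reaches an adjacent stable sliding region and slides to $p^*$, or lies in a stable sliding region whose sliding flow, by Lemma \ref{lema-direcao-campo-deslizante} and condition (1) of Definition \ref{pstar}, converges to $p^*$. In either case the forward trajectory of $p$ meets $p^*$, so $p\in\M_c$; this gives $\Sigma\subset\M_c$. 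The one point requiring care is the finitely many fold singularities on $\Sigma$ and the boundary points separating $\M_c$ from $\M_h$: I would note that the homoclinic leaves comprising $\M_h$ emanate from neighborhoods of the invisible tangencies in $I_1$ (the bottom identification yielding $p_S$), so these leaves are confined to the lower hemisphere and do not intersect $\Sigma_2$ itself, leaving all of $\Sigma$ inside $\M_c$.

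The main obstacle I anticipate is not the measure estimate but the careful bookkeeping at the interface $\overline{\M_h}\cap\overline{\M_c}$: one must ensure that $\Sigma$ does not accidentally fall on the boundary stratum where trajectories are homoclinic to $p_S$ rather than asymptotic to $p^*$. Resolving this requires invoking condition (2) of Definition \ref{pstar}, which forces all arcs $\tilde\Gamma_{p_i}$ into the region $R_1$ adjacent to $p^*$, thereby guaranteeing that trajectories issuing from $\Sigma$ are funneled toward the stable sliding region of $p^*$ rather than trapped in the homoclinic family. Once this is pinned down, the corollary follows by simply reading off the defining property of $\M_c$ from Theorem \ref{teorema-dinamica-global-esfera}.
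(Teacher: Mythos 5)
Your proposal is correct and follows essentially the same route as the paper: the paper gives no separate proof, simply observing that Theorem \ref{teorema-dinamica-global-esfera} (i) and (iii) imply the corollary, which is exactly your deduction (positive measure from containing the open upper hemisphere, and $\Sigma\subset\M_c$ from the dynamics on the switching manifold via Lemma \ref{lema-direcao-campo-deslizante} and Definition \ref{pstar}). Your version merely makes explicit the bookkeeping that the paper leaves implicit, namely why $\Sigma$ itself lies in $\M_c$ rather than on the homoclinic stratum $\M_h$.
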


Corollary \ref{cor-dinamica-global-esfera} is an analogous of Theorem \ref{caos-toro22}, however, due to the presence of $p_N,p_S$ as singularities, with $p_N\in\M_c$, the set $\M_c$ is not minimal in the sense of Definition \ref{def-minimal}.

\section{Final remarks}\label{secao-remarks}

The qualitative theory of PSVF had a fast development after a mathema\-ti\-cal formalization, provided by the results in \cite{F}, \cite{Ko} and \cite{V}, for example. However, comparing the maturity of the theory of PSVF with the analogous to the smooth case, it is clear that has much work and effort to be done to reach the level of results and fullness of the smooth case. Besides than, in the last years the number of papers and books treating specifically of this subject has increased, see \cite{diBernardo-livro}, \cite{F}, \cite{Ko}, \cite{Or}, among others.

In this paper we started the study of PSVF in bidimensional surfaces besides then euclidean spaces. Specifically, we proved the ergodicity of the irrational rotation on torus in case where the switching manifold $\Sigma$ on torus coincides with the crossing region. In the case of sphere, the dynamics coincides with the north-south flow, i.e., for all points in sphere, except the north and south poles which are singularities, has $\omega$-limit, $\alpha$-limit as the south, north pole, respectively. When $\Sigma \equiv \Sigma^{s,u}$ and $X^s$ is regular there exists a global attractor/repeller on the torus and sphere. This kind of result can be helpful in the global study of dynamics on torus and sphere. 

When $X\in \Omega^r$ presents fold singularities, we see a drastic changes of the dynamics depending, among other factors, on the number of fold singularities (see Theorem \ref{prop-Dobra-Regular}). We have to avoid the PSVF that presents an odd number of fold singularities in $\Sigma$, as an odd number of folds implies in a line of singularities. Finally, as in the previous case, when we consider a finite number of pseudo equilibrium for $X^s$, if this number is odd, almost one of this pseudo equilibrium is a saddle-node point. Therefore, this PSVF it is not structural stable.

In Subsection \ref{Secao-Caos}, we explore the presence of chaos and minimal sets of  the PSVF on $\M$. We provide conditions under which the system presents a chaotic behavior. We also characterize when there exists minimal sets of $X\in \Omega^r$ (considering a finite number of fold singularities). 

We remark the difference between the results obtained here for PSVF and some classical results for smooth dynamics. For instance, in the case $\M=\Ss$, we prove the existence of a region on $\Ss$ composed by homoclinic connections and this behavior is generic in the set of $X\in \Omega^r$ on $\Ss$. 

The case $\M=\T$ also present some unusual results, for instance the existence of an open subset of the set of PSVF that has a transient behavior and sensibility under the initial conditions, which it is not generic.

Theorems A, B and C pave the way to obtain some Kupka-Smale results for PSVF defined on $\M$. For instance, we have to study PSVF with singularities on $\M$, that were not considered on this paper. The analysis of the dynamics of PSVF defined on sphere and torus, and also other compact manifolds, will be treated in a forthcoming paper.  



\vs

\noindent {\textbf{Acknowledgments.}} R. M. Martins is supported by FAPESP-Brazil project 2015/06903-8. D. J. Tonon is supported by grant\#2012\-/10 26 7000 803, Goi\'as Research Foundation (FAPEG), PROCAD/CAPES grant 88881.0 68462/2014-01 and CNPq-Brazil grants 478230 /2013-3 and 443302/\-2014-6. This work was partially realized at UFG/Brazil as a part of project numbers 35796 and 040393.

\end{document}